\numberwithin{equation}{section}
\newtheorem{thm}{Theorem}
\numberwithin{thm}{section}
\newtheorem{prop}[thm]{Proposition}
\newtheorem{lemma}[thm]{Lemma}
\theoremstyle{definition}
\newtheorem{example}[thm]{Example}
\newtheorem{remark}[thm]{Remark}
\newtheorem{definition}[thm]{Definition}
\newcounter{FNC}[page]
\def\fauxfootnote#1{{\addtocounter{FNC}{2}$^\fnsymbol{FNC}$%
     \let\thefootnote\relax\footnotetext{$^\fnsymbol{FNC}$#1}}}
\newcommand{\C}{\mathbb{C}}
\newcommand{\N}{\mathbb{N}}
\newcommand{\R}{\mathbb{R}}
\newcommand{\Z}{\mathbb{Z}}
\newcommand{\A}{{\mathcal A}}
\newcommand{\vol}{{\rm vol}}
\renewcommand{\>}{{\rangle}}
\newcommand{\sgnvar}{{\operatorname{sgnvar}}}
\newcommand{\sgn}{{\operatorname{sgn}}}
\title[Optimal Descartes' Rule of Signs for Circuits]{Optimal Descartes' Rule of Signs \\ for Systems supported on Circuits} 
\author{Fr\'ed\'eric Bihan}
\address{Laboratoire de Math\'ematiques\\
         Universit\'e de Savoie\\
         73376 Le Bourget-du-Lac Cedex\\
         France}
\email{Frederic.Bihan@univ-savoie.fr}
\urladdr{http://www.lama.univ-savoie.fr/~bihan/}
\author{Alicia Dickenstein}
\address{Dto.\ de Matem\'atica, FCEN, Universidad de Buenos Aires, and IMAS (UBA-CONICET), Ciudad Universitaria, Pab.\ I, 
C1428EGA Buenos Aires, Argentina}
\email{alidick@dm.uba.ar}
\urladdr{http://mate.dm.uba.ar/~alidick}
\author{Jens Forsg{\aa}rd}
\address{University of Texas at San Antonio, Department of Mathematics, One UTSA Circle, San Antonio, TX 78249, USA}
\email{jensforsgard@gmail.com}
\thanks{AD was partially supported by UBACYT 20020100100242, CONICET PIP 20110100580, and ANPCyT PICT 2013-1110, Argentina. FB was partially supported by grant ANR-18-CE40-0009 ``ENUMGEOM'' of Agence Nationale de Recherche.
JF was funded by SNSF grant \#159240 ``Topics in tropical and real geometry,'' the NCCR SwissMAP project, and NWO-grant TOP1EW.15.313.}  
\begin{document}

\begin{abstract}
We present an optimal  version of Descartes' rule of signs to bound the number
of positive real roots of a sparse system of polynomial equations in $n$ variables with $n+2$ monomials.
This sharp upper bound is given in terms of the sign variation of a sequence associated to the exponents and the
coefficients of the system.
\end{abstract}

\maketitle

\section{Introduction}
Given a  univariate real polynomial  $f = c_0 + c_1 x + \cdots + c_r x^r$, 
 Descartes' rule of signs, proposed in 1637, asserts that the number of 
positive roots of $f$, counted with multiplicity, does not exceed the sign variation $\sgnvar(c_0, \dots, c_r)$ 
of the ordered sequence of its coefficients.  This integer
 $\sgnvar(c_0, \dots, c_r)$ is defined as the number of distinct pairs $(i,j)$ of integers 
between $0$ and $r$ which satisfy that $c_i \cdot c_j <0$ and $c_\ell=0$ for any index
$\ell$ with $i<\ell<j$. This bound is extremely simple and it is sharp in the following sense:
if the signs of the coefficients are fixed in $\{0,\pm1\}$ (with the convention that $0$ has sign $0$),
then one can choose absolute values of the coefficients such that the bound is attained.

Little is known in the multivariate case. We refer the reader to the article~\cite{BD}, where a multivariate
rule of signs is obtained in the following setting.
Fix an ordered exponent set $\A=\{a_0,a_1,\ldots,a_{n+1}\} \subset \Z^n$ of cardinality
$n+2$. For any coefficient matrix $C =(c_{i,j}) \in \R^{n \times (n+2)}$, consider the 
sparse polynomial system in $n$ variables $x=(x_1, \dots, x_n)$ with support $\A$:
\begin{equation}\label{E:system}
f_i(x)=\sum_{j=0}^{n+1} c_{i,j}\,x^{a_j} = 0 \, , \quad i=1,\ldots.
\end{equation}

We denote by $n_\A(C)$ the number of positive solutions  of ~\eqref{E:system}
counted with multiplicity.  We will make throughout the text the natural hypotheses that the rank of $C$ equals $n$ and that the convex hull of $A$ is 
of dimension $n$. Under these assumptions,  Proposition 2.12 in~\cite{BD} gives a checkable condition for $n_\A(C)$ to be finite. 
The problem is to find a sharp upper bound for  $n_\A(C)$, when it is finite, in the spirit
of Descartes' rule of signs.  Such an upper bound was given in Theorem~2.9 in~\cite{BD}; however, this bound is not sharp for any given support $\A$ as in the classical Descartes' rule of signs, see  \cite[Example 5.2]{BD} for instance. We provide an upper bound which is sharp for any given support $\A$ in Theorem~\ref{thm:refines}.

 A first question is under which transformations of $\A$ and $C$ is $n_\A(C)$ invariant.
If we multiply the coefficient matrix $C$ on the left by an invertible $n \times n$ real  matrix $M$, we get an equivalent system and, thus, $n_\A(C)=n_\A(M \, C)$. Therefore, this quantity only depends on the linear span of the rows of $C$ and thus a sharp upper bound should depend on the Pl\"ucker coordinates of this subspace, which are given by the maximal minors of $C$.  
A second observation is that if we multiply all the polynomials by a monomial $x^b$, then we do not change the set of solutions in the torus $(\C^*)^n$. Moreover,  the number of these solutions (and also the number of positive solutions) is invariant under an invertible monomial change of variables in the torus.  Thus, $n_\A(C)$ is an affine invariant of $\A$. We can express this invariance considering the matrix 
$A \in \Z^{(n+1)\times (n+2)}$ whose columns are the vectors $(1, a_j) \in \Z^{n+1}$ for all $a_j \in \A$:
\begin{equation}\label{eq:A}
A=\left[
\begin{array}{ccc}
1 &  \cdots &1 \\
 a_0& \cdots   & a_{n+1} 
\end{array}
\right].
\end{equation}
So, $n_\A(C)$ should also depend on the maximal minors of $A$. Note that the
hypothesis that the convex hull of $\A$  is of dimension $n$ is equivalent to the fact that 
the matrix $A$ has maximal rank $n+1$.  

In fact, $n_\A(C)$ is also invariant after a renumbering of $\A$,
if the same permutation is applied to the columns of $C$.  This 
transformation could only change in the same way the signs of the minors
of $A$ and $C$.

Theorem~2.9 in~\cite{BD} gives an upper bound  in terms of the sign variation of a sequence that depends on
sums of minors of $A$ prescribed by signs of minors of $C$.
 One interesting consequence is Theorem~3.3 in~\cite{BD} which shows that the combinatorics of
the affine configuration of the exponents restricts the maximum possible value of $n_\A(C)$ for any $C$. Consider for instance the case $n=2$. When $\A$ is a circuit, that is, when we
have four points such that no three of them lie on a line,  there are two possible combinatorial configurations, depicted in Figure~\ref{fig:dim2}: either the four points are the vertices of a quadrilateral or one point lies inside the convex hull of the others.
 In this second case (or if $\A$ is not a circuit but consists of four points with three of them on a line), we have that $n_\A(C) \le 2$ for any $C$ for which $n_\A(C)$ is finite, 
 while for circuits in the first case the upper bound might be $3$ depending on $C$.
Moreover, $n_\A(C) \le 2$ in any dimension $n$ when one of the points in the configuration $\A$ lies inside the convex hull of the remaining $n+1$ points.

\begin{figure}[h!]
\definecolor{zzttqq}{rgb}{0.8,0.2,0.}
\definecolor{qqqqff}{rgb}{0.,0.,0.5}
\begin{tikzpicture}[line cap=round,line join=round,>=triangle 45,x=1.0cm,y=1.0cm]
\fill[color=zzttqq,fill=zzttqq,fill opacity=0.1] (-2.02,4.56) -- (-3.22,3.24) -- (1.08,3.38) -- (-0.98,5.32) -- cycle;
\fill[color=zzttqq,fill=zzttqq,fill opacity=0.1] (4.62,5.36) -- (3.34,3.4) -- (6.28,3.46) -- cycle;
\draw [color=zzttqq] (-2.02,4.56)-- (-3.22,3.24);
\draw [color=zzttqq] (-3.22,3.24)-- (1.08,3.38);
\draw [color=zzttqq] (1.08,3.38)-- (-0.98,5.32);
\draw [color=zzttqq] (-0.98,5.32)-- (-2.02,4.56);
\draw [color=zzttqq] (4.62,5.36)-- (3.34,3.4);
\draw [color=zzttqq] (3.34,3.4)-- (6.28,3.46);
\draw [color=zzttqq] (6.28,3.46)-- (4.62,5.36);
\begin{scriptsize}
\draw [fill=qqqqff] (-2.02,4.56) circle (1.5pt);
\draw[color=qqqqff] (-1.84,4.9) node {$a_1$};
\draw [fill=qqqqff] (-3.22,3.24) circle (1.5pt);
\draw[color=qqqqff] (-3.04,3.58) node {$a_0$};
\draw [fill=qqqqff] (1.08,3.38) circle (1.5pt);
\draw[color=qqqqff] (1.26,3.72) node {$a_3$};
\draw [fill=qqqqff] (-0.98,5.32) circle (1.5pt);
\draw[color=qqqqff] (-0.8,5.66) node {$a_2$};
\draw [fill=qqqqff] (4.62,5.36) circle (1.5pt);
\draw[color=qqqqff] (4.8,5.7) node {$a_1$};
\draw [fill=qqqqff] (3.34,3.4) circle (1.5pt);
\draw[color=qqqqff] (3.52,3.74) node {$a_2$};
\draw [fill=qqqqff] (6.28,3.46) circle (1.5pt);
\draw[color=qqqqff] (6.46,3.8) node {$a_3$};
\draw [fill=qqqqff] (4.76,4.26) circle (1.5pt);
\draw[color=qqqqff] (4.94,4.6) node {$a_0$};
\end{scriptsize}
\end{tikzpicture}
\label{fig:dim2}
\caption{The two possible combinatorial circuits in the plane}
\end{figure}
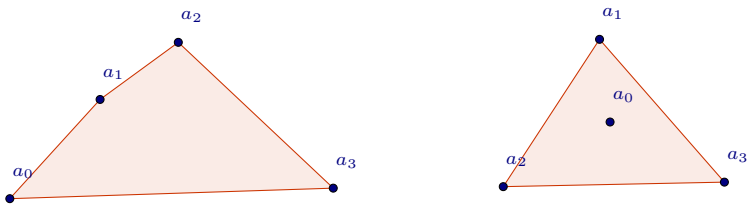

Another observation is that solutions in the torus do not change if we multiply each polynomial by a possibly {\em different} monomial.
To understand this former invariance of $n_\A(C)$, 
consider the associated Cayley matrix
$A \ast \dots \ast A$ in $\Z^{2n \times (n+2) n}$:
\begin{equation}\label{Adecomp} 
A \ast \dots \ast A \ =\ 
\begin{pmatrix}
1 \cdots 1 & 0 \cdots 0 & \cdots & 0 \cdots 0\\
0 \dots 0 & 1 \dots 1 & \cdots & 0 \dots 0\\
\vdots & \vdots& \vdots&\vdots\\
0 \dots 0 & 0 \dots 0 & \cdots & 1 \dots 1 \\
 A'& A' & \cdots & A'
\end{pmatrix},
\end{equation}
where $A' \in \Z^{n\times (n+2)}$ is the matrix with columns $ a_0, \dots,  a_{n+1}$.
So, $n_\A(C)$ is also invariant under left multiplication of $A \ast \dots \ast A$ 
by an invertible matrix and our bound should then depend on the maximal minors of
this Cayley matrix. Note that multiplication of each equation by a possibly different monomial and performing 
the same invertible monomial change of coordinates for all equations corresponds to left multiplication of $A \ast \dots \ast A$  by an invertible matrix.
By a Laplace expansion starting with the upper $n$ rows, we see that all maximal minors
of $A$ occur as appropiate maximal minors of $A \ast \dots \ast A$ and that
any maximal minor of $A \ast \dots \ast A$ is a linear combination with coefficients in $\{0, 1, -1\}$
of maximal minors of $A$.  

Our main result Theorem~\ref{thm:refines} gives a Descartes' rule
of signs in terms of the sign variation of a sequence of minors of the Cayley matrix, defined and ordered
according to the maximal minors of $C$.  Section~\ref{sec:1main} is devoted to present and prove this result.
We show in Lemma~\ref{prop:VolumeBound} that our bound does not exceed
the normalized volume $\vol_{\Z \A}(\A)$, which is a bound for the number of isolated
positive solutions coming from Bernstein-Kouchnirenko theorem.

We present here weaker versions of our results that are easier to state.
For this, consider any matrix $B \in \Z^{(n+2) \times 1}$ which is a Gale
dual of $A$. This means that the column
vector $(b_0, \dots, b_{n+1})^\top$  of $B$ is a basis of the kernel of $A$.
So, up to a multiplicative constant,
\begin{equation}\label{eq:ComplementaryMinors}
b_j = (-1)^j \, \det(A(j)),
\end{equation}
where $A(j)$ denotes the square matrix obtained by deleting from $A$
the $j$th column.
Let  $ \mathfrak{S}_{n+2}$ denote all permutations of $\{0,\dots, n+1\}$.
For any $\sigma \in \mathfrak{S}_{n+2}$, let $b_\sigma=(b_{\sigma_{0}}, \dots b_{\sigma_{n+1}})$ be the sequence
which consists of the coefficients of $B$ ordered with respect to $\sigma$.
Then, \cite[Theorem~2.9]{BD} implies that when $C$ is a uniform matrix (i.e. all maximal minors of $C$ are nonzero)
the following inequality holds:
\begin{equation}
\label{eqn:OldBound}
n_{\A}(C) \leq \max_{\sigma \in \mathfrak{S}_{n+2}} \, \sgnvar(b_{\sigma}).
\end{equation}
In this paper, we optimize this result as follows. Define the sequence
\[
\mu_\sigma = (\mu_0, \dots, \mu_{n+1}), \quad  \text{ with } \mu_j = \sum_{k=0}^j b_{\sigma_k}.
\]
These sums correspond to maximal minors of $A\ast \dots \ast A$.
We prove in Theorem~\ref{thm:refines} and Proposition~\ref{l:refines}
that for any uniform matrix $C$, the following sharper
inequality holds:
\begin{equation}
\label{eqn:NewBound}
n_{\A}(C) \leq 1+\max_{\sigma \in \mathfrak{S}_{n+2}} \, \sgnvar(\mu_{\sigma}).
\end{equation}
Actually, both results are stronger than~\eqref{eqn:OldBound} and~\eqref{eqn:NewBound} because
when the coefficient matrix $C$ satisfies the necessary
 condition~\eqref{eq:nonempty} for $n_\A(C)$ to be positive, we can associate to it a single
 permutation $\sigma$ without taking the maximum. Moreover, for general matrices
 $C$ of rank $n$ the sequences $b_\sigma$ and $\mu_\sigma$
 need to be replaced by further sums of their coefficients described in Definition~\ref{def:mu},  which depend on the pattern of minors of $C$
 that vanish. 

We show that our new bound always refines the bound in~\cite{BD} and that 
it is sharp in the following
sense: for any configuration $\A$ there exists a matrix $C$ such that $n_\A(C)$ equals the bound
we give in Theorem~\ref{thm:refines}.

We prove this in Theorem~\ref{thm:optimalmixed subdivision},
where we moreover relate the sums of minors for which the sign variation
is increased with volumes of  mixed cells {\em positively decorated by $C$} in an associated mixed subdivision.
Section~\ref{sec:2main} is centered around this second main result. We show that coefficient matrices for which the upper 
bound is attained might be obtained by Viro's method~\cite{Stu}.

\begin{example} \label{ex:intro}
Let $n=2$ and consider the three configurations:
\begin{align*}
\A_1 & = \{(0,0), (1,0), (1,2), (0,1)\}, \\
\A_2 & = \{ (0,0), (1,0), (1,1), (0,1)\}, \quad \text{and}\\
\A_3 &= \{(0,0), (3,0), (0,3), (1,1) \}.
\end{align*}
Figure~\ref{fig:MixedCells} depicts a mixed subdivision of each of the Minkowski sums $\A_i + \A_i$, $i=1,2,3$.
Notice that for the support set $\A_2$,  any mixed subdivision has
at most two mixed cells. In the other two cases, we show three mixed cells. But we will see in Example~\ref{ex:mixed3}
that if we consider $\A_3$, only two of the three mixed cells can be decorated by any matrix $C$. This is coherent with
our previous observation that $n_{\A_3}(C)$ cannot exceed $2$.
\begin{figure}[t]
\includegraphics[height=35mm]{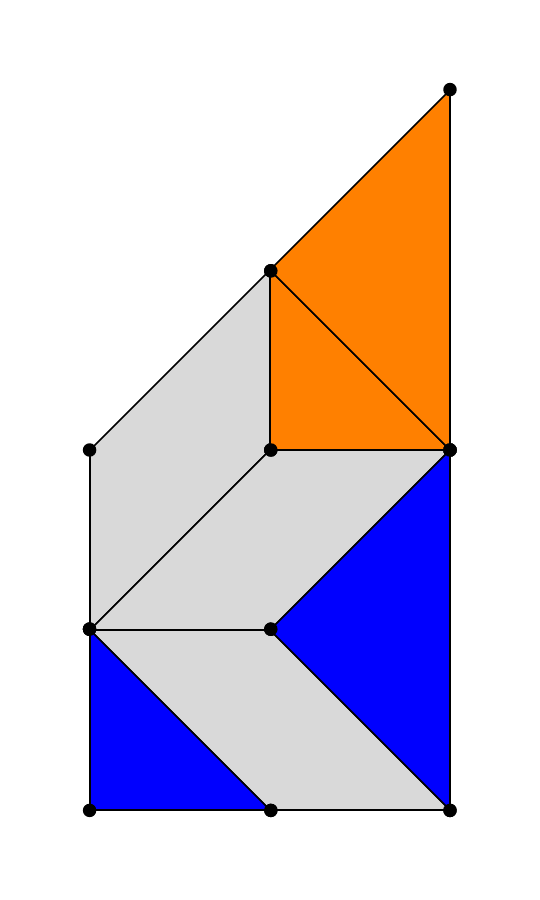}
\hskip10mm
\includegraphics[height=35mm]{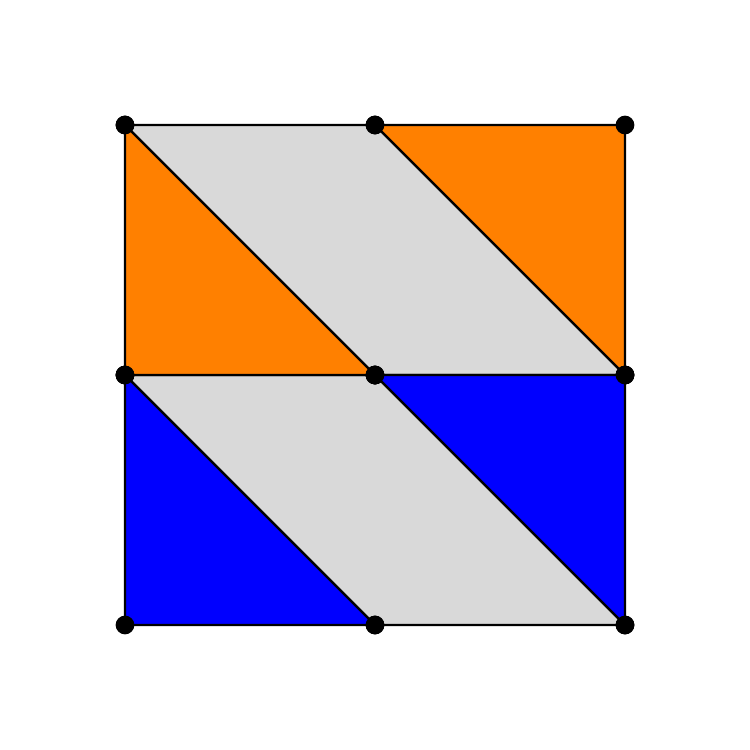}
\hskip10mm
\includegraphics[height=35mm]{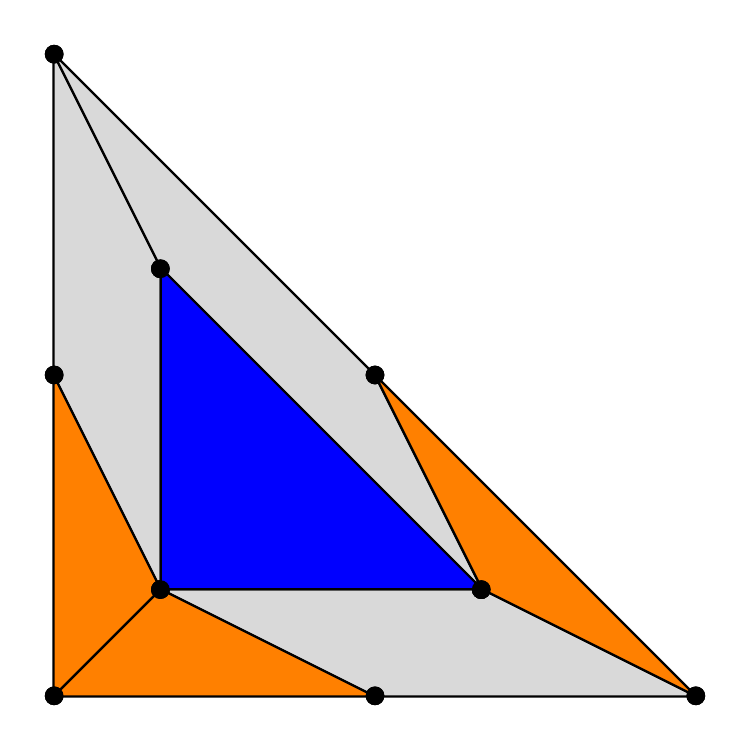}
\caption{The sign variations of $\mu_\sigma$ correspond to positively decorated mixed cells
of the mixed subdivisions.}
\label{fig:MixedCells}
\end{figure}
\end{example}

A lower bound  for $\max_{C} n_\A(C)$,
 also based on regular subdivisions of the Min\-kow\-ski sum $\A+ \dots + \A$ and Viro's method,  
was obtained in~\cite{ir96} for any value of $n$ and $m$.  These degeneration methods don't seem to be able to give an 

upper bound  for $n_\A(C)$ beyond the case $n=1$ or the case $m=n+2$ we solve here.
 
 We end the paper with the description of the moduli space of circuits in the plane which are the support of 
 bivariate polynomial systems  according to the maximal number of positive solutions. 
 
All our main results and proofs are also valid in case the configuration $\A$  of exponents  is real.

 \medskip

\noindent{\bf Acknowledgment:} We are grateful to the referee for several interesting comments; in particular, for pointing out that 
 inequality~\eqref{E:refines} could be interpreted as a discrete version of Rolle's theorem on $\R/k \Z$.

\bigskip

\section{Descartes' rule of signs for circuits} \label{sec:1main}

As we mentioned in the introduction, we will consider a configuration 
 $\A=\{a_0,a_1,\ldots,a_{n+1}\}$ in $\Z^n$ such that the convex
 hull of $A$ in $\R^n$ is $n$-dimensional.  In this section we present our optimal
 Descartes' rule of signs in Theorem~\ref{thm:refines} and we show in \S~\ref{ssec:BD} that this
 bound improves the previous bound in~\cite{BD}.

 We will restrict our study to the case
 in which $\A$ is a circuit, that is, when the configuration is minimally affinely dependent.
 In terms of the matrix $A$ in~\eqref{eq:A}  this condition
 means that all maximal minors of $A$  are nonzero. Equivalently, all entries of a Gale dual $B$ of $A$ are 
 nonzero by~\eqref{eq:ComplementaryMinors}.
 We refer the reader to Section~2 in~\cite{BD} for the easier case when this does not happen.

Given a coefficient matrix $C =(c_{i,j}) \in \R^{n \times (n+2)}$, there is a basic necessary condition 
for the existence of at least one positive solution  of the system \eqref{E:system}.  
Let $C_0, \dots, C_{n+1} \in \R^n$ denote the column vectors of the coefficient matrix $C$.
Given a solution $x \in \R_{>0}^n$,  we have that $(x^{a_0}, \dots, x^{a_{n+1}})$ is a positive vector in
the kernel of $C$ and so necessarily the origin lies in the positive cone generated by the columns of $C$:
\begin{equation}\label{eq:nonempty}
{\bf 0} \in \R_{>0} C_0 + \dots + \R_{>0} C_{n+1}.
\end{equation}
This condition depends on signs of minors of $C$.

\smallskip

As we mentioned, we will assume that the rank of $C$ is maximum.
A Gale dual of the matrix $C$ is given by a $(n+1)\times 2$ real matrix $P$ whose columns
are a basis of the kernel of $C$. In this case, we will denote by $P_0, \dots, P_{n+1}$ the
{\em row} vectors of $P$. The configuration $\{P_0, \dots, P_{n+1}\}$ is a Gale dual
configuration to the configuration $C_0, \dots, C_{n+1}$ of columns of $C$; it is unique up to linear tranformation.
The translation of the necessary condition~\eqref{eq:nonempty} to the Gale dual side
is the following. Any vector in the kernel of $C$ is of the form $P \cdot v$. Then, the existence
of a positive vector annihilated by $C$ is equivalent to the existence of a vector $v$ such that
\begin{equation}\label{eq:v}
\langle P_j, v \rangle >0, \text{  for all } j,
\end{equation} 
that is,  $P_0, \dots, P_{n+1}$ lie in an open halfspace.  In what follows we
will assume that this condition is satisfied (since otherwise $n_\A(C)=0$). 

Moreover, it is well known that maximal minors of $C$ coincide, up to a fixed nonzero constant, with
maximal minors of $P$ corresponding to complementary indices. Thus,  linear dependencies
on the Gale dual configuration reflect linear dependencies on the configuration
of columns of $C$:  two vectors $P_i, P_j$ are colinear if and only if the maximal 
minor of $C$ avoiding columns $i$ and $j$ is zero.
  
 We will use the notation $[s] = \{0,1, \dots, s-1\}$ for $s\in \N$.
 
 \begin{definition} \label{def:K}
 Let $C$ a maximal rank matrix satisfying~\eqref{eq:nonempty} and 
 $\{P_0, \dots, P_{n+1}\}$ a choice of Gale dual configuration.
We define an equivalence relation on the index set $[n+2]$ given by
\[
i \sim j \, \Leftrightarrow \, \det(P_{i}, P_{j}) = 0.
\]
We conclude that the set of equivalence classes (with cardinal $k \le n+1$)
\begin{equation}
\label{eq:Kj}
\sfrac{[n+2]}{\sim} \,= \, \{K_0, \dots, K_{k-1}\}
\end{equation}
has a canonical ordering (up to complete reversal).
By choosing an arbitrary
ordering within each equivalence class, we obtain a permutation
$\sigma \in \mathfrak{S}_{n+2}$
such that
\[
\epsilon \det(P_{\sigma_i}, P_{\sigma_j}) \geq 0, \quad i < j,
\]
where $\epsilon \in \{-1,1\}$ depends on the choice of orientation.
Let $K \subset [n+2]$ be a set of representatives of the equivalence classes
$K_0, \dots, K_{k-1}$.
Denote by $\bar{\sigma}$ the bijection
\[
\bar{\sigma}\colon [k] \rightarrow K
\]
which is induced by $\sigma$. Then,
$
\epsilon \det(P_{\bar{\sigma}_i}, P_{\bar{\sigma}_j}) > 0 \quad \text{if} \quad  i < j
$.
We say that $\sigma$ is \emph{an ordering for $C$} and that $\bar{\sigma}$ is \emph{a strict ordering} for $C$.
\end{definition}

Note that the previous definitions are {\em independent} of the choice of Gale dual configuration, since a Gale dual 
configuration is unique up to linear transformation. We present an example to clarify our definitions.

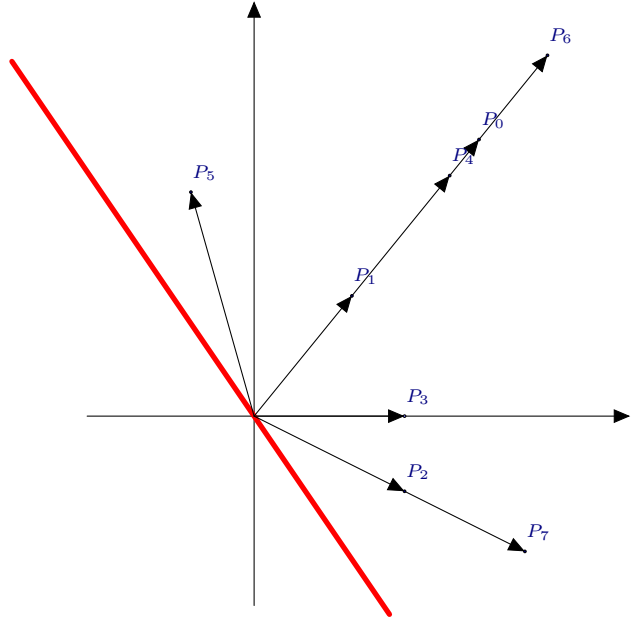
\begin{figure}[t]
\definecolor{dcrutc}{rgb}{0.8627450980392157,0.0784313725490196,0.23529411764705882}
\definecolor{xdxdff}{rgb}{0.49019607843137253,0.49019607843137253,1.}
\definecolor{qqqqff}{rgb}{0.,0.,0.5}
\definecolor{ffqqqq}{rgb}{1.,0.,0.}
\definecolor{uuuuuu}{rgb}{0.26666666666666666,0.26666666666666666,0.26666666666666666}
\begin{tikzpicture}[line cap=round,line join=round,>=triangle 45,x=1.0cm,y=1.0cm]
\draw[->,color=black] (-2.22,0.) -- (5.0,0.);
\draw[->,color=black] (0.,-2.52) -- (0.,5.52);
\draw [line width=2.pt,color=ffqqqq,domain=-3.22:1.80] plot(\x,{(-0.-5.1*\x)/3.48});
\draw [->] (0.,0.) -- (2.,-1.);
\draw [->] (2.,-1.) -- (3.6,-1.8);
\draw [->] (0.,0.) -- (2.,0.);
\draw [->] (0.,0.) -- (1.3,1.6);
\draw [->] (1.3,1.6) -- (2.6,3.2);
\draw [->] (2.6,3.2) -- (2.99,3.68);
\draw [->] (2.99,3.68) -- (3.9,4.8);
\draw [->] (0.,0.) -- (-0.84,2.98);
\begin{scriptsize}
\draw [fill=qqqqff] (2.,-1.) circle (0.5pt);
\draw[color=qqqqff] (2.18,-0.74) node {$P_2$};
\draw [fill=qqqqff] (3.6,-1.8) circle (0.5pt);
\draw[color=qqqqff] (3.78,-1.54) node {$P_7$};
\draw [fill=xdxdff] (2.,0.) circle (0.5pt);
\draw[color=qqqqff] (2.18,0.26) node {$P_3$};
\draw [fill=qqqqff] (1.3,1.6) circle (0.5pt);
\draw[color=qqqqff] (1.48,1.86) node {$P_1$};
\draw [fill=qqqqff] (2.6,3.2) circle (0.5pt);
\draw[color=qqqqff] (2.78,3.46) node {$P_4$};
\draw [fill=qqqqff] (2.99,3.68) circle (0.5pt);
\draw[color=qqqqff] (3.18,3.94) node {$P_0$};
\draw [fill=qqqqff] (3.9,4.8) circle (0.5pt);
\draw[color=qqqqff] (4.08,5.06) node {$P_6$};
\draw [fill=qqqqff] (-0.84,2.98) circle (0.5pt);
\draw[color=qqqqff] (-0.66,3.24) node {$P_5$};
\end{scriptsize}
\end{tikzpicture}
\caption{There cannot be more than $3$ positive roots}
\label{fig:gale}
\end{figure}

\begin{example}
Figure~\ref{fig:gale} features an example of a Gale dual configuration in case $n=6$ with a choice of ordering. 
Equivalence classes correspond to rays containing vectors $P_j$. Here, $k=4$. The associated
strict ordering $\bar{\sigma}\colon [4] \rightarrow K$ assigns $K_0 = \{ P_2, P_7\}$, $K_1=\{P_3\}$, $K_2=\{ P_1, P_4,
P_0, P_6\}$ and $K_3=\{P_5\}$. As we remarked,  this example does not correspond to a uniform matrix $C$.
We will see that in this case $n_\A(C)\le 3$ for any circuit $A$ in $\Z^6$. 
\end{example}

We need another definition in order to state our main result in this section.

\begin{definition} \label{def:mu}
We keep the assumptions and notations from Definition~\ref{def:K}.
For $\ell=0, \dots, k-1$, we set
\begin{equation}
\label{eqn:SetsL}
L_\ell= K_{\bar{\sigma}_{0}} \cup \dots \cup K_{\bar{\sigma}_{\ell}},
\end{equation}
and consider the two sequences $\lambda= \{\lambda_\ell\}_{\ell\in [k]}$ and $\mu = \{\mu_\ell\}_{\ell\in [k]}$ 
whose terms are defined by
\begin{equation}
\label{eqn:LambdaAndMu}
\lambda_\ell \, = \, \sum_{j \in K_{\bar{\sigma}_{\ell}}}
b_{j} 
\quad \text{and} \quad
\mu_{\ell} \, = \, \sum_{j \in L_{\ell}} b_{j}.
\end{equation}
\end{definition}

Notice that
\begin{equation}\label{eq:ls}
\mu_\ell=\lambda_0 + \dots + \lambda_\ell.
\end{equation}
In particular, $\mu_{k-1} = \sum_{\ell=0}^{k-1} \lambda_\ell = \sum_{j=0}^{n+1} b_j=0$, because the vector $(1, \dots, 1)$ is a row of $A$.
Also, both sequences depend on the strict ordering $\bar{\sigma}$, but we have not incorporated
this to the notation to make the displays look clearer. 

We are ready to state our first main result, in terms of this sequence $\mu$ constructed from  sums of minors of $A$ prescribed
by the pattern of signs of minors of $C$.

\begin{thm}\label{thm:refines} {\bf Descartes' rule of signs for circuits.}
Let $\A$ be a circuit in $\Z^n$. Let $C \in \R^{n \times(n+2)}$ a coefficient matrix of rank $n$ which satisfies~\eqref{eq:nonempty}
and $\sigma$ an ordering for $C$.  Consider the sequence $\mu$ from Definition~\ref{def:mu} associated to
the strict ordering $\bar{\sigma}$ induced by $\sigma$ in Definition~\ref{def:K}.

If $n_\A(C)$ is finite, then
\begin{equation}\label{Eq:mainDescartes}
n_\A(C)  \leq 1+\sgnvar(\mu).
\end{equation}
\end{thm}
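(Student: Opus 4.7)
The plan is to use Gale duality on the coefficient matrix $C$ to translate the counting of positive solutions of~\eqref{E:system} into counting zeros of a single univariate analytic function $F(t)$ on an open interval $I$, and then to obtain the bound $1 + \sgnvar(\mu)$ for this count by a Rolle-type argument made possible by an Abel summation.

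Let $P$ be a Gale dual of $C$. The substitution $y_j = x^{a_j}$ turns~\eqref{E:system} into the linear system $Cy = 0$ subject to the binomial constraint $\prod_j y_j^{b_j} = 1$ (encoding $Ab = 0$ together with $\sum_j b_j = 0$). Positive vectors in $\ker C$ are exactly $y = Pv$ with $v$ ranging in the open cone $\Omega = \{v \in \R^2 : \langle P_j, v\rangle > 0 \text{ for all } j\}$, whose nonemptiness is precisely condition~\eqref{eq:nonempty}. This puts positive solutions of~\eqref{E:system} in bijection (with multiplicities) with projective classes of $v \in \Omega$ satisfying $\sum_j b_j \log \langle P_j, v\rangle = 0$, in the spirit of the Gale-dual reformulation used in~\cite{BD}. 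Parametrizing the projectivization of $\Omega$ by $t$ running in an open interval $I$ makes each $\langle P_j, v(t)\rangle$ a positive affine function of $t$ on $I$. Grouping summands according to the equivalence classes $K_0, \dots, K_{k-1}$ of Definition~\ref{def:K} (indices in the same class correspond to positively proportional $P_j$'s, hence to the same affine function $u_\ell(t)$ up to a positive scalar) yields, up to an additive constant,
\[
F(t) \;=\; \sum_{\ell=0}^{k-1} \lambda_\ell \log u_\ell(t),
\]
with the ordering following $\bar{\sigma}$, so that the zeros of the $u_\ell$ on $\R$ appear in the order $t_0 < t_1 < \cdots < t_{k-1}$ along the axis extending $I$.

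Using the telescoping identity $\lambda_\ell = \mu_\ell - \mu_{\ell-1}$ (with $\mu_{-1} = 0$) together with $\mu_{k-1} = \sum_j b_j = 0$, Abel summation gives
\[
F(t) \;=\; \sum_{\ell=0}^{k-2} \mu_\ell \, \log\!\left( \frac{u_\ell(t)}{u_{\ell+1}(t)} \right),
\]
so that $F$ is a sum of $k-1$ logarithmic terms whose coefficients are precisely the partial sums $\mu_0, \dots, \mu_{k-2}$, and each ratio $u_\ell/u_{\ell+1}$ is a positive, monotone M\"obius function of $t$ on $I$. A Rolle-type induction on $k$, which at each step differentiates $F$ and multiplies by an appropriate product of the positive factors $u_\ell$ to clear poles, reduces the zero count of $F$ on $I$ to that of an auxiliary polynomial whose sign behaviour is dictated by $\mu$; a Descartes-type bookkeeping then yields $n_\A(C) \le 1 + \sgnvar(\mu_0, \dots, \mu_{k-2}) = 1 + \sgnvar(\mu)$.

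The hard part is this last step, where one must show that the refinement from the old bound~\eqref{eqn:OldBound} (in terms of $\sgnvar(b_\sigma)$) all the way down to the sharper bound in terms of $\sgnvar(\mu)$ really works: one needs to track the contribution of each sign change through the iterated Rolle argument, and to handle carefully the boundary behaviour of $F$ at the endpoints of $I$ (where some $u_\ell$ vanishes and $F$ may diverge), so that the signs of $\mu_0$ and $\mu_{k-2}$ correctly match the limits of $F$ on the two sides of $I$. The non-uniform case ($k < n+2$, i.e.\ some maximal minor of $C$ vanishes) is delicate but is absorbed cleanly by the equivalence-class framework: a class of size greater than one contributes a single log term with summed coefficient $\lambda_\ell$, so the counting of zeros of $F$ is insensitive to the internal ordering chosen inside each~$K_\ell$, which is why $\mu$ and not the individual $b_j$'s is what governs the bound.
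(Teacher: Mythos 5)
Your setup coincides with the paper's: Gale duality reduces the count to the zeros of $G(y)=\sum_{\ell}\lambda_\ell\log p_{\bar\sigma_\ell}(y)$ on $\Delta_P$, the grouping by the classes $K_\ell$ handles the non-uniform case exactly as in the paper, and the Abel summation producing $G(y)=\sum_{\ell=0}^{k-2}\mu_\ell\log\bigl(p_{\bar\sigma_\ell}/p_{\bar\sigma_{\ell+1}}\bigr)$ is the paper's identity~\eqref{eq:gbis}. One application of Rolle then reduces everything to bounding the zeros of $G'=\sum_{\ell}\mu_\ell h_\ell$, where $h_\ell=p'_{\bar\sigma_\ell}/p_{\bar\sigma_\ell}-p'_{\bar\sigma_{\ell+1}}/p_{\bar\sigma_{\ell+1}}$, by $\sgnvar(\mu)$.

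That last bound is the entire content of the theorem, and it is precisely the step you leave unproved. Your stated justifications are not sufficient as they stand: positivity and monotonicity of each ratio $u_\ell/u_{\ell+1}$ only say that each $h_\ell$ has a fixed sign on $\Delta_P$, and a linear combination $\sum\mu_\ell h_\ell$ of positive functions can have many more zeros than $\sgnvar(\mu)$ unless the family $(h_0,\dots,h_{k-2})$ is a Descartes system on $\Delta_P$. The ``iterated Rolle, clear denominators, reduce to an auxiliary polynomial'' plan is also not obviously workable: clearing poles by multiplying by $\prod p_{\bar\sigma_\ell}$ yields a polynomial whose degree is governed by $k$, not by $\sgnvar(\mu)$, and you give no mechanism by which each sign change of $\mu$ (rather than each nonzero $\mu_\ell$) accounts for at most one zero. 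The paper closes this gap by invoking the P\'olya--Szeg\H{o} criterion (its Proposition~\ref{P:Descartes}): a family satisfies Descartes' rule of signs on an interval iff all its sub-Wronskians are nonzero with consistent signs, and it verifies this for the $h_\ell$ by an explicit Vandermonde computation using $\bigl(p'/p\bigr)^{(k-1)}=(-1)^{k-1}(k-1)!\,(p'/p)^k$ together with the nonvanishing and consistent signs of the determinants $\det(P_{\bar\sigma_j},P_{\bar\sigma_i})$ guaranteed by the strict ordering. You need this lemma (or an equivalent Chebyshev-system argument) to make the final step rigorous; without it the proof is incomplete at its decisive point.
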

In particular, $n_\A(C) \le k$. Also, $k \le n+1$ and equality holds if and only if $C$ is uniform. 

\begin{example}\label{ex:rectangle}
Consider  the configuration  of vertices of the unit square $\A_2 =\{ (0,0), (1,0), (1,1), (0,1)\}$.
 Let $C \in \R^{2 \times 4}$ be  a
uniform coefficient matrix such that  the identity is an ordering for $C$.  We can choose $B = (1, -1, 1, -1)^\top$
and so $\mu_0 = 1$, $\mu_1 = 1+(-1)=0$, $\mu_2= 1$, and $\mu_3=0$.  In this case, $n_{\A_2}(C) \le 
 1+\sgnvar(\mu) = 1$. Instead, if the associated ordering $\sigma$ for $C$ is given by $\sigma_{0}=0, \sigma_{1}=1$,
$ \sigma_{2}=3$, and $\sigma_{3}=2$, we get that $\mu_0 = 1$, $\mu_1 =0$, $\mu_2=-1$, and $\mu_3=0$. 
So, $n_{\A_2}(C) \le  1+\sgnvar(\mu) = 2$, 
 and this is the maximum possible upper bound given by Theorem~\ref{thm:refines}. 
 Indeed,  if $n_{\A_2}(C)$ is finite, it is at most ${\rm vol}_{\Z \A}(\A_2)=2$. 
 \end{example}

Before giving the proof of Theorem~\ref{thm:refines}, we first recall some basic notions from \cite[\S~4.1]{BD}.
A sequence $(h_1,h_2,\ldots,h_s)$ of real valued analytic functions  defined 
on an open interval $\Delta \subset \R$ satisfies Descartes' rule of signs on $\Delta$ if for any
sequence $c=(c_1,c_2\ldots,c_s)$ of real numbers, the number of roots 
 of $c_1h_1+c_2h_2+\cdots+c_sh_s$ in $\Delta$ counted with multiplicity never exceeds $\sgnvar(c)$.
The classical univariate Descartes' rule of signs asserts that monomial bases 
satisfy Descartes' rule of signs on the open interval $(0,+\infty)$.
Observe that $(h_1,h_2\ldots,h_s)$ satisfies Descartes' rule of signs 
on $\Delta$, if and only if $(h_s,h_{s-1},\ldots,h_1)$ has this property.

\smallskip

Recall that the Wronskian of $h_1,\ldots,h_s$ is the following determinant
$$
W(h_1,\ldots,h_s)= \mbox{det}
\left(
\begin{array}{cccc}
h_1 & h_2 & \ldots & h_s \\
h_1'& h_2' & \ldots & h_s'\\
\vdots & \vdots & \ldots & \vdots \\
h_1^{(s-1)} & h_2^{(s-1)} & \ldots & h_s^{(s-1)}
\end{array}
\right).$$
It is well known that $h_1,\ldots,h_s$ are linearly dependent if and only if their Wronskian
is identically zero.

\begin{prop}[{\cite[part 5, items 87 and 90]{P-S}}]\label{P:Descartes}
A sequence of functions $h_1,\ldots,h_s$ satisfies Descartes' rule of signs on $\Delta \subset \R$
if and only if for any collection of integers $1 \leq j_1 <j_2 \ldots < j_\ell \leq s$ we have
\begin{enumerate}
\item \label{E:cond1}
$
\quad W(h_{j_1},\ldots,h_{j_\ell}) \neq 0
$
\end{enumerate}
and for any collections of integers $1 \leq j_1 <j_2 \ldots < j_\ell \leq k$ 
and $1 \leq j_1' <j_2' \ldots < j_\ell' \leq k$ of the same size, we have
\begin{enumerate}
\setcounter{enumi}{1}
\item \label{E:cond2}
$\quad W(h_{j_1},\ldots,h_{j_\ell}) \cdot W(h_{j_1'},\ldots,h_{j_\ell'}) >0.\hfill\qed$
\end{enumerate}
\end{prop}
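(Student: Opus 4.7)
The plan is to reduce the theorem, via Gale duality, to a univariate counting problem and then to apply Proposition~\ref{P:Descartes} to a carefully chosen sequence of $k$ analytic functions whose coefficients in the resulting linear combination are, up to one prepended constant, precisely the entries of the sequence $\mu$.

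First, I would pass to the Gale dual picture. A positive solution $x$ of~\eqref{E:system} yields $y = (x^{a_0}, \ldots, x^{a_{n+1}}) \in \R^{n+2}_{>0}\cap \ker C$, hence $y = P v$ for some $v$ in the open cone $\Omega = \{v \in \R^2 : \langle P_j, v\rangle > 0 \text{ for all } j\}$, which is nonempty by~\eqref{eq:v}. Conversely, such a $y$ arises from a positive solution precisely when the binomial relation $\prod_j y_j^{b_j} = 1$ holds. Since $\sum_j b_j = 0$ (using the row $(1,\ldots,1)$ of $A$), this condition is degree-zero in $v$ and descends to $\P(\Omega)$, which I would parametrize by a single variable $t$ on an open interval $\Delta\subset\R$. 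Grouping factors according to the equivalence classes $K_{\bar{\sigma}_\alpha}$—within each of which all $P_j$ are positive multiples of a common representative $\tilde P_\alpha$ by the strict ordering of Definition~\ref{def:K}—the equation becomes $\prod_\alpha \ell_\alpha(t)^{\lambda_\alpha} = R$ on $\Delta$, where $\ell_\alpha(t) := \langle \tilde P_\alpha, v(t)\rangle$ is a linear form strictly positive on $\Delta$ and $R > 0$ is a constant. Taking logarithms, $n_\A(C)$ equals the number of zeros on $\Delta$ of
\[
G(t) \;=\; \sum_{\alpha=0}^{k-1} \lambda_\alpha \log \ell_\alpha(t) \;-\; \log R.
\]

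Next I would apply Abel summation. Writing $\lambda_\alpha = \mu_\alpha - \mu_{\alpha-1}$ (with $\mu_{-1} := 0$) and using $\mu_{k-1} = \sum_j b_j = 0$, one obtains
\[
G(t) \;=\; -\log R \;+\; \sum_{\alpha=0}^{k-2} \mu_\alpha\, \phi_\alpha(t), \qquad \phi_\alpha(t) := \log\!\bigl(\ell_\alpha(t)/\ell_{\alpha+1}(t)\bigr).
\]
Provided the sequence $H = (1,\phi_0,\phi_1,\ldots,\phi_{k-2})$ of $k$ analytic functions satisfies Descartes' rule of signs on $\Delta$, applying it to the coefficient vector $(-\log R,\mu_0,\mu_1,\ldots,\mu_{k-2})$ would bound the number of zeros of $G$ by
\[
\sgnvar(-\log R,\mu_0,\ldots,\mu_{k-2})\;\leq\; 1 + \sgnvar(\mu_0,\ldots,\mu_{k-2}) \;=\; 1+\sgnvar(\mu),
\]
the last equality because the trailing zero $\mu_{k-1}$ contributes no sign change. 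Combined with the trivial bound $\sgnvar(\mu) \leq k-1$, this also delivers the ``in particular'' statement $n_\A(C) \leq k$.

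The hard part will be verifying the Wronskian conditions~\eqref{E:cond1}--\eqref{E:cond2} of Proposition~\ref{P:Descartes} for the sequence $H$. In the coordinate $v(t) = (1,t)$, a direct computation yields
\[
\phi_\alpha'(t) \;=\; \frac{-\det(\tilde P_\alpha,\tilde P_{\alpha+1})}{\ell_\alpha(t)\,\ell_{\alpha+1}(t)},
\]
whose sign, by the strict ordering of Definition~\ref{def:K}, is the constant $-\epsilon$ for every $\alpha$ throughout $\Delta$. A Wronskian of a subsequence of $H$ that contains the first entry $1$ reduces, by cofactor expansion along the first column, to the Wronskian of the corresponding $\phi'_\alpha$'s; these are rational functions in $t$ with denominators positive on $\Delta$ and numerators built from the $2\times 2$ determinants $\det(\tilde P_\alpha,\tilde P_\beta)$, which by Definition~\ref{def:K} all share the common sign $\epsilon$ for $\alpha<\beta$. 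An induction on the cardinality of the index subset—extracting the common positive linear forms $\ell_{\alpha_i}$ from the denominators and recognizing the residual factor as a sign-definite Vandermonde-like determinant in the components of the $\tilde P_\alpha$—should show that every such Wronskian is nonvanishing on $\Delta$ and has a uniform sign, thereby establishing both conditions and completing the proof.
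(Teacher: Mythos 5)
Your proposal does not prove the statement in question. The statement here is Proposition~\ref{P:Descartes} itself: the P\'olya--Szeg\H{o} characterization (an ``if and only if'') of when a sequence of analytic functions $h_1,\ldots,h_s$ satisfies Descartes' rule of signs on an interval, in terms of non-vanishing of all Wronskians $W(h_{j_1},\ldots,h_{j_\ell})$ and sign-consistency of same-size Wronskians. What you have written is instead a proof sketch of Theorem~\ref{thm:refines} (the main Descartes bound for circuits), in which Proposition~\ref{P:Descartes} is \emph{invoked} as a black box --- ``Provided the sequence $H$ satisfies Descartes' rule of signs \dots applying it to the coefficient vector \dots'' --- so as an argument for the proposition it is circular, and most of your text (Gale duality, Abel summation, the sequence $\mu$, the bound $n_\A(C)\le k$) is simply about a different result. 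For the record, the paper offers no proof of this proposition either: it is cited to P\'olya--Szeg\H{o}, part 5, items 87 and 90, and stated with a \emph{qed}; your sketch of Theorem~\ref{thm:refines}, by contrast, does run parallel to the paper's actual proof of that theorem (grouping the $p_j$ by equivalence classes, rewriting $g$ via the telescoped exponents $\mu_\ell$, checking the Wronskian hypotheses for the functions $p'_{\bar\sigma_\ell}/p_{\bar\sigma_\ell}-p'_{\bar\sigma_{\ell+1}}/p_{\bar\sigma_{\ell+1}}$ via a Vandermonde computation, then Rolle), but that is not what was asked.

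A genuine proof of Proposition~\ref{P:Descartes} would have to argue both directions about arbitrary coefficient sequences $c$. The ``if'' direction is typically done by induction on $s$: the hypotheses force each $h_j$ to be nonvanishing of constant sign, one divides $c_1h_1+\cdots+c_sh_s$ by $h_1$, differentiates, applies Rolle's theorem, and uses Wronskian identities of the type $W\big((h_2/h_1)',\ldots,(h_\ell/h_1)'\big)=W(h_1,\ldots,h_\ell)/h_1^{\ell}$ to see that the derived sequence again satisfies the hypotheses with one fewer function. The ``only if'' direction requires exhibiting, whenever some Wronskian vanishes or two same-size Wronskians have opposite signs, a coefficient vector whose combination has more zeros (with multiplicity) than sign changes. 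None of these ingredients appear in your proposal, so the statement remains unproved.
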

If a sequence of analytic functions $h_1,\ldots,h_s$ satisfies Descartes' rule of signs on $\Delta \subset \R$, then $h_1, \dots, h_s$
do not vanish, have the same sign on $\Gamma$ and no two of them are proportional. 

We will also need the following proposition, which is inspired in Lemma~2.1 (Fekete's Lemma) in~\cite{Pinkus} for the case of totally positive matrices.

\begin{prop}\label{prop:Fekete}
Let  $V$ be a $t \times k$ real matrix, $t \le k$, with the following property: for any $s \le t$ there exists a sign $\sigma_s \in \{ -1,1\}$ 
such that for any choice of $s$ {\sl consecutive} indices $j_1, j_1+1, \dots, j_1 +s-1$, the sign of the determinant of the submatrix of $V$ consisting of the first $s$ 
rows and the $s$ consecutive columns with these indices, equals $\sigma_s$. Then, the sign of any $s \times s$ minor of $V$ consisting of the first $s$ 
rows and any $s$ columns equals $\sigma_s$. 
\end{prop}

\begin{proof}
We need to prove that all  $s \times s$ minors of $V$ consisting of the first $s$ 
rows and any $s$ columns have the same sign $\sigma_s$ shared by all $s\times s$ minors involving the first $s$  rows and $s$ consecutive columns.
For any $1 \le s \le t$ and any subset  $J = \{j_1, \dots, j_{s}\}$ with $1 \leq j_{1}< j_{2}<\cdots <j_{s} \leq k$,  denote by $V_J$ the submatrix of $V$ 
consisting of the first $s$ rows and the (ordered) columns in $J$. Define the associated dispersion number $d(J)$ to be the number of integer points in 
the interval $[j_{1},j_{s}]$ which are distinct from $j_1,\ldots,j_{s}$.  Note that $d( j_{1}, j_{2},\cdots ,j_{s})=0$ means that $ j_{1}, j_{2},\cdots ,j_{s}$ are consecutive integer numbers.

We prove the result by double induction on $s \geq 1$ and $d \geq 0$. The result is obvious when $s=1$ (which implies $d=0$)
and we are assuming that the result holds for any $s$ when $d=0$. 

Let $s \geq 2$ and $d \geq 1$ (and so $s < k$). Assume by inductive hypothesis that the result holds for all  $s' \le  s$ and all $d' \le d$ such that at least one of the inequalities is strict. 
Let $J =\{ j_1, \dots, j_s\}$, $1 \leq j_{1} < \cdots <j_{s} \leq k$, with $d(J)=d$. Then, there exists another index $j$ such that $j_1 < j < j_s$. Note that $d(J \cup \{j\})= d(J)-1$. 

Now,  by Equality (1.2) in Chapter~1 of ~\cite{Pinkus}, we have
\begin{equation} \label{12}
\begin{array}{c}
\det(V_J) \cdot \det(V_{J \setminus \{j_1, j_s\} \cup \{j\} })
=
\\

\det(V_{(J \setminus \{j_1\}) \cup \{j\} }) \cdot \det(V_{J \setminus \{j_s\} })

+
\det(V_{(J \setminus \{j_s\}) \cup \{j\} }) \cdot \det(V_{J \setminus \{j_1\} }).
\end{array}
\end{equation}
Note that the second factor in each summand in~\eqref{12} corresponds to a minor of size $s-1$ and so, all of them have the same sign $\sigma_{s-1}$. Also
\[d((J \setminus \{j_1\}) \cup \{j\})\le d (J \cup \{j\}) < d(J),\] 
and similarly  $d((J \setminus \{j_s\}) \cup \{j\})\le d (J \cup \{j\}) < d(J)$.
Again by inductive hypothesis, the determinants of $V_{(J \setminus \{j_1\}) \cup \{j\}}$ and $V_{(J \setminus \{j_s\}) \cup \{j\}}$ have the same sign $\sigma_s$.
It follows that the sign of $\det(V_J)$ also equals $\sigma_s$, as wanted.
\end{proof}

Consider again a coefficient matrix $C \in \R^{n \times (n+2)}$ of rank $n$ satisfying~\eqref{eq:nonempty}.
Since a Gale dual  configuration is defined up to a linear transformation, 
we will assume without loss of generality that we have a Gale dual configuration $\{P_0, \dots, P_{n+1}\}$  
such that the  vector $v$ in~\eqref{eq:v} has first coordinate equal to $1$.
We define the associated linear functions 
\begin{equation}\label{eq:p}
p_{j}(y) = \langle P_{j}, (1,y) \rangle,
\end{equation}
and the open segment
\[
\Delta_P=\{y \in \R \, : \, p_{j}(y)>0\}.
\]
Note that $\Delta_P$ is nonempty because of our hypothesis about $v$, and $p_{0}\dots, p_{n+1}$ are positive linear functions on $\Delta_P$.
The following basic proposition is crucial.

\begin{prop}[{\cite{BS07}, see also \cite[\S 4]{BD}}]
\label{prop:Galebijection}
Consider the polynomials $p_{j}$ in~\eqref{eq:p} and define
the following function $g : \Delta_P \to \R$  
\begin{equation}\label{eq:g}
g(y) = \prod_{j \in [n+2]}  p_j(y)^{{b}_j}.
\end{equation}
Then, $n_\A(C)$ equals the number of solutions, counted with multiplicities, of the equation \begin{equation}
\label{eqn:GaleDualSystem}
g(y)=1
\end{equation}
in the interval $\Delta_P$. 
\end{prop}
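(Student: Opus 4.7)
The plan is to establish a bijection between the positive solutions of the polynomial system~\eqref{E:system} and the roots of $g(y)=1$ in $\Delta_P$ by exploiting two orthogonal characterizations of the set of positive vectors $(y_0,\dots,y_{n+1}) \in \R_{>0}^{n+2}$ that arise as $(x^{a_0},\dots,x^{a_{n+1}})$ for some $x \in \R_{>0}^n$ solving the system. Any positive solution $x$ gives such a vector $y$ satisfying two conditions: (a) $y \in \ker(C)$, because $f_i(x)=0$ for all $i$, and (b) $\log y$ lies in the row span of the full matrix $A$ from~\eqref{eq:A}, because $y_j = x^{a_j}$ (an \emph{a posteriori} constant factor $x_0 > 0$ absorbs the first row of $A$). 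The translation of these conditions through the Gale duals $P$ and $B$ produces precisely the equation $g(y)=1$ on $\Delta_P$.

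First I would parameterize the positive part of $\ker(C)$. Since the columns of $P$ form a basis of $\ker(C)$, every $y \in \ker(C)$ is uniquely written as $y = P u$ for some $u \in \R^2$, so $y_j = \langle P_j,u\rangle$. Positivity of $y$ is equivalent to $u$ lying in the open cone dual to $\{P_0,\dots,P_{n+1}\}$, which is nonempty by~\eqref{eq:v} and, after rescaling $u$ by a positive constant so that $u_1 = 1$, is parameterized by $t \in \Delta_P$ via $u = (1,t)$ and $y_j = p_j(t)$. Next, condition (b) is translated by Gale duality on the exponent side: since $B$ spans $\ker(A)$, a positive vector $w \in \R_{>0}^{n+2}$ has $\log w$ in the row span of $A$ if and only if $\log w$ is orthogonal to $B$, i.e.\ $\prod_j w_j^{b_j} = 1$. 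Applying this to $w_j = u_1\, p_j(t)$ and using that $\sum_j b_j = 0$ (since the first row of $A$ is all ones), the factor $u_1^{\sum b_j}=1$ drops out and condition (b) becomes exactly $g(t) = 1$.

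I would then verify that the assignment $x \mapsto t$, where $y_j = x^{a_j}$, $u$ is determined by $P u = y$, and $t = u_2/u_1$, is a well-defined bijection onto the roots of $g$ in $\Delta_P$. Injectivity and surjectivity both reduce to the fact that $A$ has full row rank $n+1$: given $t \in \Delta_P$ with $g(t) = 1$, the vector $(\log p_j(t))_j$ lies in the row span of $A$, so it equals $(r + \langle a_j,s\rangle)_j$ for a unique $(r,s) \in \R^{n+1}$, yielding $x = e^s \in \R_{>0}^n$ with $p_j(t) = e^r x^{a_j}$; one then checks $\sum_j c_{i,j} x^{a_j} = e^{-r}\langle C P (1,t)^\top,e_i\rangle = 0$ because $CP = 0$. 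The two constructions are mutually inverse by the uniqueness of the decompositions used on either side.

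The main obstacle, and the one I expect to be most delicate, is the matching of multiplicities, since the statement counts roots with multiplicity on both sides. I would handle this by showing that the correspondence is given by an analytic change of coordinates on an ambient slice: locally near a solution $x$, the map $\Phi(x) = u_2(x)/u_1(x)$ (with $u(x)$ determined by $P u = (x^{a_j})$) is analytic, and the pullback $\Phi^*(g-1)$ generates the same ideal in the local ring at $x$ as the one cut out by $(f_1,\dots,f_n)$, because the two systems of equations are related by the invertible linear reparameterization $C \leftrightarrow P$ together with the monomial reparameterization of the torus encoded by the full-rank matrix $A$. Equivalently, one may argue by a generic perturbation: for a uniform perturbation $C_\varepsilon$ both sides have only simple roots and the counts coincide by the point-set bijection, so the equality of multiplicity counts follows by semicontinuity and conservation of number under the analytic family. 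Either approach upgrades the set-theoretic bijection to an equality of counts with multiplicities, completing the proof.
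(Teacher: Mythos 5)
The paper does not actually prove this proposition; it quotes it from \cite{BS07} and \cite[\S 4]{BD}, so your proposal must be judged on its own merits. The set-theoretic part of your argument is the standard Gale-duality correspondence and is essentially right: a positive solution $x$ yields the positive vector $(x^{a_j})_j\in\ker C$, which you write as $Pu$ and normalize to $u=u_1(1,t)$, and the condition that a positive vector be a scalar multiple of some $(x^{a_j})_j$ is exactly orthogonality of its coordinatewise logarithm to $B$, i.e.\ $g(t)=1$; the inverse construction via the full row rank of $A$, and the verification $CP=0$, are also correct. One point you pass over (as does the paper's own normalization ``$v$ has first coordinate $1$'') is that rescaling $u$ to have $u_1=1$ requires $u_1>0$ for \emph{every} $u$ in the open cone $\{u:\langle P_j,u\rangle>0\ \forall j\}$, not merely for one such $u$; this holds only after choosing the $GL_2$-representative of the Gale dual so that $e_1$ lies in the interior of the cone generated by $P_0,\dots,P_{n+1}$, and without that choice the slice $u_1=1$ can miss rays of the cone, hence miss solutions. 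This is fixable, but it should be said.

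The genuine gap is the multiplicity comparison, which you rightly single out as the delicate step but do not close. Your first route cannot work as stated: $\Phi^*(g-1)$ is a single analytic function, so it generates a principal ideal, which cannot coincide with the height-$n$ ideal $(f_1,\dots,f_n)$ at an isolated zero once $n\ge 2$; moreover $\Phi(x)=u_2(x)/u_1(x)$ with $Pu=(x^{a_j})_j$ is not even defined on a neighborhood of a solution in $\R_{>0}^n$, since $(x^{a_j})_j$ lies in $\ker C=\mathrm{im}(P)$ only at solutions. The correct mechanism (this is what \cite{BS07} proves) is that after Gaussian elimination on $C$ and a monomial change of coordinates using a basis among the $a_j-a_0$, all but one of the equations define a smooth analytic curve parametrized by $y$, and the remaining equation restricted to that curve equals $g(y)-1$ up to a unit; only then does local multiplicity transfer. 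Your second route also fails: real roots counted with multiplicity are \emph{not} conserved under perturbation (a double real root can split into a complex-conjugate pair), so ``semicontinuity and conservation of number'' controls at best local \emph{complex} multiplicities, and you would then need to extend the bijection to complex solutions near the real ones, which your argument does not provide.
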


Proposition~\ref{prop:Galebijection} follows from the following two facts: i) all vectors in the kernel of $C$ are of the form
$(p_0(y), \dots, p_{n+1}(y))$ for $y \in \R^2$, and ii) assuming all $p_i(y)$ are positive, 
there exists a positive vector $x$ with
nonzero coordinates such that $x^{a_j}= p_j(y)$ for all $j$ if and only if $g(y)=1$. 

\medskip

\noindent{\bf Proof of Theorem~\ref{thm:refines}.}
Let $\A$, $C$ and the ordering $\sigma$ be as in the statement of Theorem~\ref{thm:refines}. 
Choose a Gale dual configuration $\{P_0, \dots, P_{n+1}\}$ of the columns of $C$ satisfying~\eqref{eq:v}.
Consider the linear forms $p_{j}$ as in \eqref{eq:p} and the rational function $g\colon \Delta_P \to \R$ defined in~\eqref{eq:g}.
By Proposition~\ref{prop:Galebijection}, it is enough to bound the number of solutions, counted with multiplicities, of the equation \eqref{eqn:GaleDualSystem} in 
$\Delta_P=\{y \in \R \, | \, p_{j}(y)>0\}$. 
As we assume that $n_\A(C)$ is finite, we get that $g(y)\not \equiv 1$.

Recall the partition of $[n+2]$ defined in~\eqref{eq:Kj}. For any $\ell \in [k]$, an index $j \in K_\ell$ if and only 
if there exists a positive constant $e_{j,\ell}$ such that $P_j = e_{j, \ell} \,P_{{\bar{\sigma}_\ell}}$.
Thus, there is a positive constant $e$ such that
\begin{equation}\label{eq:gbis}
g(y) = e \cdot \prod_{\ell \in [k]} p_{\bar{\sigma}_{\ell}}(y)^{\lambda_\ell},
\end{equation}
Consider the associated sequence $\mu$ as in Definition~\ref{def:mu}.
We may rewrite $g(y)$ as
\begin{equation}\label{eq:gbis}
g(y) = e \cdot \prod_{\ell \in [k-1]} \left( \frac{p_{\bar{\sigma}_\ell}(y)}
{p_{\bar{\sigma}_{\ell+1}}(y)}\right)^{\mu_\ell}.
\end{equation}
The logarithm $G(y) = \log(g(y))$ is well defined over $\Delta_P$ and for any $y \in \Delta_P$
we have $g(y) =1 $ if and only if $G(y) =0$.
The derivative of $G$ over $\Delta_P$ equals
\begin{equation}\label{eq:lambdabar}
G'(y) = \sum_{{\ell \in [k-1]}} \mu_\ell
\left(
\frac{p'_{\bar{\sigma}_\ell}}{p_{\bar{\sigma}_\ell}} - 
\frac{p'_{\bar{\sigma}_{\ell+1}}}{p_{\bar{\sigma}_{\ell+1}}}
\right).
\end{equation}
Observe  that  there exists $\epsilon \in \{+1,-1\}$ such that, for $j > i$,
\begin{equation}
\label{eqn:DifferenceIsDeterminant}
\epsilon \cdot \left(\frac{p'_{\bar{\sigma}_i}}{p_{\bar{\sigma}_i}}-\frac{p'_{\bar{\sigma}_j}}{p_{\bar{\sigma}_j}} \right) =
\epsilon \cdot
\frac
{\det
(P_{\bar{\sigma}_j},P_{\bar{\sigma}_i})}
{p_{\bar{\sigma}_i}\, p_{\bar{\sigma}_j}} > 0  \quad \text{\, on \, }  \Delta_P.
\end{equation}

We moreover claim that  the  functions $\{\frac{p'_{\bar{\sigma}_\ell}}{p_{\bar{\sigma}_\ell}} - 
\frac{p'_{\bar{\sigma}_{\ell+1}}}{p_{\bar{\sigma}_{\ell+1}}}, \ell 
\in [k-1]\}$ satisfy the hypotheses of Proposition~\ref{P:Descartes}.
By Proposition~\ref{prop:Fekete} it is enough to consider the case in which the indices are consecutive. Moreover, for simplicity, it is enough to
compute the Wronskian
\[
W\bigg(\frac{p'_1}{p_1}-\frac{p'_2}{p_2},\, \frac{p'_2}{p_2}-\frac{p'_3}{p_3},\ldots, \frac{p'_{\ell}}{p_{\ell}}-\frac{p'_{\ell+1}}{p_{\ell+1}}\bigg).
\]
We have
$$
\bigg(\frac{p'_j}{p_j}\bigg)^{(k-1)} =(-1)^{k-1}\, (k-1)!  \bigg(\frac{p'_j}{p_j}\bigg)^{k},
$$
and a computation of a Vandermonde determinant leads to
\begin{align*}
W\bigg(\frac{p'_1}{p_1}-\frac{p'_2}{p_2},\,\ldots, \frac{p'_{\ell}}{p_{\ell}}-\frac{p'_{\ell+1}}{p_{\ell+1}}\bigg) & =
(-1)^{\ell}\gamma_\ell \cdot \mbox{det}
\left( \Big(\frac{p'_j}{p_j}\Big)^{k-1}
\right)_{1\leq k,j \leq \ell+1} \\
& = \displaystyle(-1)^{\ell} \gamma_\ell \cdot \prod_{1 \leq j < i \leq \ell+1} \Big(\frac{p'_i}{p_i}-\frac{p'_j}{p_j}\Big),
\end{align*}
where $\gamma_\ell$ denotes the product
$\gamma_\ell \, = \, \prod_{k=1}^{\ell}  (-1)^{k-1} (k-1)! $.  The above Wronskian is non-vanishing, as the determinants 
$\det(P_j, P_i)$ are non-vanishing, and its sign only depends on $\ell$.

Then, our collection satisfies conditions \eqref{E:cond1}
and \eqref{E:cond2} in Proposition \ref{P:Descartes} and so
the number of roots of
$G'$ on $\Delta_P$ counted with multiplicities is at most $\sgnvar(\mu)$.
Rolle's theorem leads to the bound $1+\sgnvar(\mu)$ for the number of roots of $g$ contained in $\Delta_P$ counted with multiplicities, 
and thus to this bound for the number $n_\A(C)$, as wanted.

\subsection{Our bound refines Theorem~2.9 in \cite{BD}}\label{ssec:BD}
Let ${\rm vol}_{\Z}(\A)$ denote the  normalized volume of the configuration $\A \subset \Z^n$ with respect to  the lattice $\Z^n$,  
which is the Euclidean volume of the convex hull of $\A$ multiplied by $n!$.  By Bernstein-Kouchnirenko Theorem, this volume ${\rm vol}_{\Z}(\A)$ bounds the number of isolated solutions of
system~\eqref{E:system} in the complex torus. 

\begin{remark} \label{rem:vol}
Dividing ${\rm vol}_{\Z}(\A)$ by the index $I$ of the affine sublattice $\Z{\A}$ generated by $\A$ gives the normalized volume ${\rm vol}_{\Z\A}(\A)$ with respect to the lattice $\Z\A$. 
Then ${\rm vol}_{\Z\A}(\A)$ bounds the number of isolated positive solutions of system~\eqref{E:system}. 
This index $I$ is equal to the greatest common divisor of the maximal minors of the matrix  $A$ and it is also equal to the
 greatest common divisor of the coefficients $b_j = (-1)^j \det(A(j))$ which give a choice of $B$, as in~\eqref{eq:ComplementaryMinors}. 
Since $\A$ is a circuit, Proposition 1.2 of Chapter 7 in \cite{GKZ}  shows that
the convex hulls of the subsets ${\rm conv}(\A \setminus \{a_j\})$ for all $j$ such that $b_j>0$ give a triangulation of
 ${\rm conv}(A)$. Thus, the Euclidean volume of ${\rm conv}(\A \setminus \{a_j\})$ equals $\frac {b_j}{n!}$ and so 
 $\vol_\Z({\rm conv}(\A \setminus \{a_j\}) = b_j$. Therefore, we have that $\vol_{\Z}(\A)=\sum_{b_j >0} b_{j}$,
from which  $\vol_{\Z\A}(\A)=\frac{1}{I}\sum_{b_j >0} b_{j}$ (cf. \cite[Section 2.1]{BD}). 
\end{remark}

With the same hypotheses about $\A,C$, and $\sigma$ as in Theorem~\ref{thm:refines}, let $\lambda$ be the sequence in Definition~\ref{def:mu}.  
Theorem 2.9 in~\cite{BD} proves that if $n_\A(C)$ is finite, then
\[
n_\A(C) \leq {\rm max}(\sgnvar(\lambda), {\rm vol}_{\Z\A}(\A)).
\]
We show in Proposition~\ref{l:refines} and Lemma~\ref{prop:VolumeBound} below that our bound always refines this result. We first give an example where
we get a strict inequality.

\begin{example}[Example~\ref{ex:rectangle}, continued] \label{ex:suite}
Consider again  the configuration  $\A_2 =\{ (0,0), (1,0), (1,1), (0,1)\}$
and choose $B = (1, -1, 1, -1)^\top$.  Let $C \in \R^{2 \times 4}$ be a
uniform coefficient matrix such that  the identity is an ordering for $C$. 
Then, $\sgnvar(\lambda)=3$ but $1+\sgnvar(\mu) = 1$. Moreover, we saw
that $n_{\A_2}(C) \le 2$ for any $C$. 
 \end{example}
 
 We will use the following definition.
 
 \begin{definition}\label{indicessignchanges}
Let $\lambda=(\lambda_0,\lambda_1,\ldots, \lambda_{k-1})$ be a finite 
a sequence of real numbers with $\sgnvar(\lambda)=m$. The \emph{indices of sign changes of $\lambda$} are 
the indices $\{\ell_0, \dots, \ell_m\} \subset [k]$ defined recursively by
\begin{enumerate}
\item $\ell_0 = \min \big\{\ell \in \{0,\dots, k-1\} \,\big| \, \lambda_\ell \neq 0 \big\}$, and\vskip1mm
\item $\ell_i = \min \big\{\ell \in \{\ell_{i-1}, \dots, k-1\} \, \big| \, \lambda_\ell \, \lambda_{\ell_{i-1}} < 0\big\}$.
\end{enumerate}
\end{definition}

Part of the following result is stated as Problem~7.4.3 in~\cite{Ph}. 

 \begin{prop}\label{l:refines}
Let $\lambda=(\lambda_0, \lambda_1,\ldots,\lambda_{k-1})$ be a nonzero sequence of real numbers such that $\sum_{\ell \in [k]} \lambda_\ell=0$, 
and set $\mu_\ell= \lambda_0 + \dots + \lambda_\ell$ for $\ell \in [k]$. Then,
\begin{equation}\label{E:refines}
1+\sgnvar(\mu) \leq \sgnvar(\lambda).
\end{equation}
and
\begin{equation}\label{E:mod2}
\sgnvar(\lambda) \equiv 1+\sgnvar(\mu) \mod 2.
\end{equation}
In addition, if $\lambda$ is an integer sequence, with $V=\sum_{\lambda_\ell >0} \lambda_\ell$, then
\begin{equation}\label{E:lessthanvolume}
1+\sgnvar(\mu) \leq V.
\end{equation}
\end{prop}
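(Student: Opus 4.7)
My plan is to handle the inequality~\eqref{E:refines} and the parity statement~\eqref{E:mod2} together via a classical polynomial manipulation, and to treat the integer bound~\eqref{E:lessthanvolume} by a separate direct total-variation argument.

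To prove~\eqref{E:refines} and~\eqref{E:mod2}, I would associate to the two sequences the generating polynomials
\[
Q(x) \;=\; \sum_{\ell=0}^{k-1} \lambda_\ell \, x^\ell
\qquad \text{and} \qquad
P(x) \;=\; \sum_{\ell=0}^{k-1} \mu_\ell \, x^\ell.
\]
Using $\lambda_\ell = \mu_\ell - \mu_{\ell-1}$ for $\ell \geq 1$ (with $\mu_{-1} := 0$), $\lambda_0 = \mu_0$, and the vanishing $\mu_{k-1} = 0$ forced by $\sum \lambda_\ell = 0$, a short telescoping gives the factorisation $Q(x) = (1-x)\,P(x)$. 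I would then invoke the classical strengthening of Descartes' rule, stated in~\cite{P-S}, which asserts that multiplying a real polynomial $f$ by a linear factor with a positive real root increases the sign variation by a positive odd integer; since $\sgnvar$ is invariant under a global sign change of the coefficient sequence, the same is true for multiplication by $(1-x)$. Hence $\sgnvar(Q) - \sgnvar(P)$ is positive and odd. Observing that the trailing zero $\mu_{k-1} = 0$ does not affect $\sgnvar(P) = \sgnvar(\mu)$, this yields both~\eqref{E:refines} and~\eqref{E:mod2} simultaneously.

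For~\eqref{E:lessthanvolume} I would argue by total variation. Extend $\mu$ by $\mu_{-1} := 0$, so that $\mu$ starts and ends at $0$. Its total variation is
\[
\sum_{\ell=0}^{k-1} |\mu_\ell - \mu_{\ell-1}| \;=\; \sum_{\ell=0}^{k-1} |\lambda_\ell| \;=\; 2V,
\]
the last equality using $\sum \lambda_\ell = 0$, which forces $\sum_{\lambda_\ell < 0} \lambda_\ell = -V$. Writing $s := \sgnvar(\mu)$, I extract indices $0 \leq a_1 < a_2 < \dots < a_{s+1} \leq k-1$ with each $\mu_{a_i} \neq 0$ and $\mu_{a_i}\mu_{a_{i+1}} < 0$. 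Because consecutive entries have opposite signs, $|\mu_{a_{i+1}} - \mu_{a_i}| = |\mu_{a_i}| + |\mu_{a_{i+1}}|$, so the variation along the essential path $0 \to \mu_{a_1} \to \dots \to \mu_{a_{s+1}} \to 0$ telescopes to $2 \sum_{i=1}^{s+1} |\mu_{a_i}|$. Integrality of $\lambda$ makes every $\mu_{a_i}$ a nonzero integer, so this essential variation is at least $2(s+1)$. On the other hand, the triangle inequality applied segment by segment bounds it above by the full total variation $2V$. Comparing these two estimates yields $V \geq s + 1$, as required.

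The main conceptual steps are the identification $Q = (1-x) P$, which routes~\eqref{E:refines} and~\eqref{E:mod2} through the standard statement in~\cite{P-S} on multiplication by a linear factor with a positive root, and the observation for~\eqref{E:lessthanvolume} that alternating nonzero integer values of $\mu$ each contribute twice along the essential path, producing exactly the factor of $2$ which converts the $s+1$ lower bounds $|\mu_{a_i}| \geq 1$ into the desired estimate on $V$. I do not anticipate a serious obstacle beyond referencing the classical multiplication fact, whose proof in~\cite{P-S} is standard.
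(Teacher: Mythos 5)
Your proposal is correct, and both halves take a genuinely different route from the paper. For \eqref{E:refines} and \eqref{E:mod2}, the paper argues directly on the sequences: it locates, at each index of sign change of $\mu$, a sign change of $\lambda$ (giving $\sgnvar(\lambda)\geq\sgnvar(\mu)$), and then compares the signs of the last nonzero entries $\mu_{k-2}$ and $\lambda_{k-1}$ to show the difference is odd. You instead package the same content into the factorisation $Q(x)=(1-x)P(x)$ — which is correct, since $\mu_{k-1}=0$ kills the degree-$k$ term — and invoke the classical P\'olya--Szeg\H{o} lemma that multiplication by $x-a$ with $a>0$ raises the sign variation by a positive odd integer. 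This is cleaner and more modular, at the cost of outsourcing the combinatorial core to \cite{P-S} (note the paper cites items 87 and 90 of Part V for a different purpose, so you would need to point to the correct problem on linear factors); the paper's version is self-contained. For \eqref{E:lessthanvolume}, your total-variation argument is genuinely nicer: the closed path $0\to\mu_0\to\cdots\to\mu_{k-1}=0$ has total variation $\sum_\ell|\lambda_\ell|=2V$, the subpath through the $s+1$ alternating nonzero integer values has variation exactly $2\sum_i|\mu_{a_i}|\geq 2(s+1)$, and monotonicity of total variation under passing to a subsequence closes the argument in one stroke, avoiding the paper's case split on the parity of $m$ (even steps contributing to $\sum\max(\lambda_j,0)$ versus odd steps to $-\sum\min(\lambda_j,0)$). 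All the small hypotheses check out: $\lambda\neq 0$ forces $\mu\not\equiv 0$, so $P\neq 0$ and the index $a_1$ exists, and the trailing zero $\mu_{k-1}$ affects neither $\sgnvar(\mu)$ nor the choice of the $a_i$.
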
 
\begin{proof}
Let $m=\sgnvar(\mu)$, and let
$\ell_0<\ell_1<\cdots<\ell_m$ be the indices of sign changes in the sequence
$\mu$. Assume without loss of generality that $\mu_{\ell_0} >0$ (so that the first
non-zero entries of both $\mu$ and $\lambda$ are positive). Then,
\[
\sgn(\mu_{\ell_s})=(-1)^{s}, \quad s=0,\ldots,m.
\]
In addition,
\[
\sgn(\lambda_{\ell_s})=\sgn(\mu_{\ell_{s}}-\mu_{\ell_{s}-1}) = (-1)^{s},
\]
since either $\mu_{\ell_{s}-1} = 0$ or $\sgn(\mu_{\ell_{s}-1})=-\sgn(\mu_{\ell_{s}})$.
It follows that
\[
r \vcentcolon= \sgnvar(\lambda) - m \geq 0.
\]
Hence, to prove \eqref{E:refines} and \eqref{E:mod2},
it remains only to prove that $r$ is odd.
There is no loss of generality in assuming that $\mu_{k-2} \neq 0$.
Since $\mu_{k-1}=\mu_{k-2}+\lambda_{k-1}=0$,
we conclude that $\lambda_{k-1} \neq 0$ and
\[
\mu_{k-2} \, \lambda_{k-1} <0.
\]
Then, $\mu_{k-2}$ and $\lambda_{k-1}$ are the last non-zero entries
of the sequences $\mu$ respectively $\lambda$.
In particular,
\[
\sgn(\mu_{k-2}) = (-1)^m
\quad \text{ and } \quad
\sgn(\lambda_{k-1}) = (-1)^{m+r}.
\]
It follows that
\[
-1 = \sgn(\mu_{k-2} \, \lambda_{k-1}) = (-1)^{2m+r},
\]
which proves that $r$ is odd.

Assume in addition that $\lambda$ is an integer sequence.
Then, $\lambda_{\ell_{0}} \geq 1$. Moreover, for $s=1,\ldots,m$ we get
\[
2 \leq (-1)^s\big(\mu_{\ell_s} - \mu_{\ell_{s-1}}\big)
\]
since $\mu_{\ell_{s}}$ and $\mu_{\ell_{s-1}}$ are nonzero integers with opposite signs.
In particular, if $s = 2t$ is even, then
\[
2 \leq \sum_{j=\ell_{s-1}+1}^{\ell_s} \lambda_j \leq \sum_{ j = \ell_{s-1} +1}^{\ell_s} \max(\lambda_j, 0).
\]
It follows that,
\[
V=\sum_{j=0}^{k-1}  \max(\lambda_j, 0)
 \geq \lambda_{\ell_0}+\sum_{t=1}^{\lfloor \frac{m}{2} \rfloor} 
 \sum_{j=\ell_{2t-1} +1}^{\ell_{2t}} \max(\lambda_j,0)
 \geq 1 + 2\left\lfloor \frac{m}{2} \right \rfloor,
\]
which gives the desired inequality when $m$ is even. When $m$ is odd, we proceed similarly,
using instead that if $s = 2t-1$, then
\[
2 \leq \sum_{j=\ell_{s-1}+1}^{\ell_s}-\lambda_j \leq \sum_{ j = \ell_{s-1} +1}^{\ell_s} -\min(\lambda_j, 0).\qedhere
\]
\end{proof}

We now give a direct proof that our bound in Theorem~\ref{thm:refines} refines the bound $\vol_{\Z\A}(\A)$  from Bernstein-Kouchnirenko Theorem for integer circuits $\A$.

\begin{lemma}
\label{prop:VolumeBound}
With the same hypotheses about $\A,C$, and $\sigma$ as in Theorem~\ref{thm:refines}, take
$b_j = (-1)^j \, \det(A(j))$ as in~\eqref{eq:ComplementaryMinors} and
let $\mu$ be the sequence in Definition~\ref{def:mu}. 
We have the inequality
\begin{equation}\label{Eq:mainDescarteslessvolume}
1+\sgnvar(\mu) \leq \vol_{\Z\A}(\A).
\end{equation}
\end{lemma}

\begin{proof}
Let $B = (b_0,\dots, b_{n+1})^\bot$ be a Gale dual of $A$ and $I = \gcd(b)$. Recall from Remark~\ref{rem:vol} that
$\vol_{\Z\A}(\A)=\frac{1}{I}\sum_{b_j >0} b_{j}$. 
The inequality 
\[
1+\sgnvar(\mu) \leq \vol_{\Z\A}(\A)
\]
follows from \eqref{E:lessthanvolume} applied to the sequence
$s=(\frac{b_0}{I},\ldots,\frac{b_k}{I})$.
\end{proof}

Recall that $\A$ is
said to have a Cayley structure if there is a nontrivial partition of the indices
$[n+2]= J \cup J^c$ such that $\sum_{j \in J} b_j = \sum_{j \notin J} b_j =0$,
where $b_0, \dots, b_{n+1}$ are the coefficients of a Gale dual matrix $B$ of $A$.  
We obtain a result analogous to \cite[Proposition 2.12]{BD}. 

\begin{prop}\label{prop:congruence}
Let $\A$ and $C$ be as in the statement of Theorem~\ref{thm:refines}, and let
$\lambda_0, \dots, \lambda_{k-1}$ corresponding to a strict ordering for $C$ as
in Definition~\ref{def:mu}. Assume moreover that
$\lambda_0$ and $\lambda_{k-1}$ are different from $0$. Then, 
\begin{equation}
\label{eqn:DifferenceIsEven}
1+\sgnvar(\mu)\equiv n_\A(C) \mod 2.
\end{equation}
Therefore, $n_\A(C) > 0$ if $\sgnvar(\mu)$ is even.

In particular, \eqref{eqn:DifferenceIsEven} holds for any $C$ if $\A$ does not have a Cayley structure, 
and it holds for any $\A$ if $C$ is uniform.
\end{prop}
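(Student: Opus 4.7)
The plan is to exploit the Gale dual function $g$ of Proposition~\ref{prop:Galebijection} together with a parity count of its zeros. By that proposition, $n_\A(C)$ equals the number of zeros of $g-1$ counted with multiplicity on the open interval $\Delta_P$, and $g-1$ is real-analytic there. Since a zero of a real-analytic function of odd order is exactly a sign change, the parity of the total multiplicity of zeros on $\Delta_P$ equals the parity of the number of boundary sign changes of $g-1$. So the whole proof reduces to computing the signs of $\lim_{y \to y_\pm} (g(y)-1)$ at the two endpoints $y_\pm$ of $\Delta_P$.

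Using the factorization $g(y) = e \cdot \prod_{\ell \in [k]} p_{\bar{\sigma}_\ell}(y)^{\lambda_\ell}$ from~\eqref{eq:gbis}, I would argue that the two endpoints of $\Delta_P$ are precisely the points where some $p_j$ vanishes and, because the strict ordering $\bar{\sigma}$ arranges the classes $K_\ell$ by polar angle in the planar Gale configuration, that $p_{\bar{\sigma}_0}$ vanishes at one endpoint while $p_{\bar{\sigma}_{k-1}}$ vanishes at the other, all remaining $p_{\bar{\sigma}_\ell}$ staying bounded away from $0$. Since $\lambda_0$ and $\lambda_{k-1}$ are nonzero by hypothesis, $g$ tends to $0$ or $+\infty$ at each endpoint according to the sign of the corresponding $\lambda$, so $g-1$ has sign $-\sgn(\lambda_0)$ at one endpoint and $-\sgn(\lambda_{k-1})$ at the other. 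Therefore the number of boundary sign changes of $g-1$ is $0$ when $\lambda_0\lambda_{k-1}>0$ and $1$ when $\lambda_0\lambda_{k-1}<0$, so $n_\A(C)$ is even in the first case and odd in the second. A parallel count for $\mu$ uses that $\mu_0 = \lambda_0$ is the first nonzero entry of $\mu$ (with sign $\sgn(\lambda_0)$) and that $\mu_{k-2} = -\lambda_{k-1}$ is the last nonzero entry (with sign $-\sgn(\lambda_{k-1})$), because $\mu_{k-1} = 0$ while $\lambda_{k-1}\neq 0$. It follows that $\sgnvar(\mu)$ is even exactly when $\lambda_0\lambda_{k-1}<0$, so $1+\sgnvar(\mu)$ matches the parity of $n_\A(C)$, establishing~\eqref{eqn:DifferenceIsEven} and, in particular, $n_\A(C)>0$ whenever $\sgnvar(\mu)$ is even.

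For the final clause, observe that $k \geq 2$ since the Gale dual $P$ has rank $2$, so $K_{\bar{\sigma}_0}$ and $K_{\bar{\sigma}_{k-1}}$ are proper nonempty subsets of $[n+2]$. When $\A$ has no Cayley structure, every partial sum $\sum_{j\in J} b_j$ over a proper nonempty $J$ is nonzero, forcing $\lambda_0,\lambda_{k-1}\neq 0$. When $C$ is uniform, every class $K_\ell$ is a singleton so $\lambda_\ell = b_{\bar{\sigma}_\ell}$, and all $b_j$ are nonzero because $\A$ is a circuit. The step I expect to be trickiest is the geometric identification in the middle paragraph, namely that the two endpoints of $\Delta_P$ correspond to the vanishing of the two extreme classes $K_{\bar{\sigma}_0}$ and $K_{\bar{\sigma}_{k-1}}$ and not of some intermediate class; this requires tying the strict ordering $\bar{\sigma}$ to the cyclic order of the planar Gale dual and verifying that the remaining $p_{\bar{\sigma}_\ell}$ are strictly positive on the closure of $\Delta_P$.
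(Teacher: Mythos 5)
Your argument is correct in substance but takes a genuinely different route from the paper's on the analytic side. Both proofs share the same combinatorial reduction: $\mu_0=\lambda_0$ and $\mu_{k-2}=-\lambda_{k-1}$ are the first and last nonzero entries of $\mu$, so $\sgnvar(\mu)\equiv\sgnvar(\mu_0,\mu_{k-2})\pmod 2$ and the parity of $1+\sgnvar(\mu)$ is governed by $\sgn(\lambda_0\lambda_{k-1})$. The paper then simply invokes \cite[Proposition~2.12]{BD}, which supplies the congruence $n_\A(C)\equiv\sgnvar(\lambda)\pmod 2$, together with $\sgnvar(\lambda)\equiv\sgnvar(\lambda_0,\lambda_{k-1})\pmod2$. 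You instead re-derive that congruence from scratch via Proposition~\ref{prop:Galebijection}: the parity of the number of roots of $g-1$ on $\Delta_P$, counted with multiplicity, equals the number of sign disagreements of $g-1$ at the two ends of $\Delta_P$, and those limiting signs are $-\sgn(\lambda_0)$ and $-\sgn(\lambda_{k-1})$. This buys a self-contained proof (essentially reproving the cited result in the grouped setting) at the cost of redoing the boundary analysis; your identification of the two extreme classes $K_{\bar\sigma_0},K_{\bar\sigma_{k-1}}$ with the two ends of $\Delta_P$ via the angular order of the planar Gale configuration is the right mechanism and does go through.

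One point needs repair: the ends of $\Delta_P$ need not be finite zeros of some $p_j$, since $\Delta_P$ can be a half-line. This happens exactly when one of the two extreme classes consists of horizontal vectors $P_j=(c,0)$, $c>0$; note that the normalization required for Proposition~\ref{prop:Galebijection} to hold literally forces the open cone $\{w:\,Pw>0\}$ to lie in $\{w_1>0\}$, which rules out the worse situation where all $P_{j,2}$ have the same strict sign (there $g-1$ would tend at infinity to a constant of uncontrolled sign, and indeed the unnormalized Gale parametrization misses solutions). In the half-line case the constant $p_{\bar\sigma_0}$ (say) never vanishes, but $\sum_\ell\lambda_\ell=0$ gives $g(y)\sim c'\,y^{-\lambda_0}$ as $y\to\infty$, so $g-1$ still has limiting sign $-\sgn(\lambda_0)$ and your count is unaffected. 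With that case added, and the routine remark that the finitely many zeros of the analytic function $g-1$ lie in a compact subinterval so that the sign-change count is legitimate, the proof is complete; the closing paragraph on the Cayley and uniform cases is fine as written.
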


\begin{proof}
Assume that $\lambda_0, \lambda_{k-1} \neq 0$. 
Notice that $\mu_0=\lambda_0 \neq 0$.  We also have that
$0=\mu_{k-1}= \mu_{k-2}+\lambda_{k-1}$, which gives $\mu_{k-2}=-\lambda_{k-1} \neq 0$.
Hence,
\[
\sgnvar(\mu)=\sgnvar(\mu_0,\mu_1,\ldots,\mu_{k-2}) \equiv \sgnvar(\mu_0,\mu_{k-2}) \mod 2.
\] 
We also get
 \[
 \sgnvar(\mu_0,\mu_{k-2}) + \sgnvar(\lambda_0, \lambda_{k-1})=1.
 \] 
 The result follows now from \cite[Proposition 2.12]{BD} and the fact that $\sgnvar(\lambda_0, \lambda_{k-1}) \equiv \sgnvar(\lambda_0, \ldots, \lambda_{k-1}) \, \mod 2$.
\end{proof}

\section{Optimality and interpretation of our bound} \label{sec:2main}

In this section we prove our second main result Theorem~\ref{thm:optimalmixed subdivision}, where
we prove that our upper bound in Theorem~\ref{thm:refines} is sharp and we give an interpretation
of the coefficients in the sequence $\mu$ associated to a matrix $C$ and strict ordering $\sigma$ in terms
of decorated mixed cells in a mixed subdivision of the Minkowski sum of the supports of the
polynomial system~\ref{E:system}.
We give a very quick introduction to these notions in \S~\ref{ssec:basic}, and we refer the reader to~\cite{BDG,GKZ,HRS,Stu}
for further details. 

\subsection{Some previous notions and results}\label{ssec:basic}

Let $\A \subset \Z^n$ be a finite configuration.
A {\it real Viro polynomial system} is a polynomial system with exponents in $\A$ and 
coefficient matrix 
\begin{equation}\label{eq:Ct}
C_{t}=(c_{i,j}\,t^{h_{i,j}}),
\end{equation}
where $t$ is a real parameter and both the coefficient matrix $C=(c_{i,j})$ and  the lifting matrix $H=(h_{i,j}) $ are real matrices of the same size.
 Let $\A_{i}$ denote the support of the $i$-th equation, that is, $\A_{i}=\{a_{j} \in \A \, , \, c_{ij} \neq 0\}$.
 
The matrix $H$ defines a regular subdivision of the associated Cayley configuration
 $\A_{1} \ast \cdots \ast \A_{n}$ as well as a regular  mixed subdivision of the Minkowski sum
 $\A_{1}+\cdots+\A_{n}$.
These two subdivisions determine each other via the combinatorial Cayley trick (see \cite{Stu-Cayley} or \cite{HRS}).
That is, each cell of the regular subdivision of $\A_{1} \ast \cdots \ast \A_{n}$ determined by $H$ can be written as a Cayley configuration
$\sigma_{1} \ast \cdots \ast \sigma_{n}$, where the Minkowski sum $\sigma_{1}+\cdots+\sigma_{n}$ is a cell of the regular mixed 
subdivision of $\A_{1}+\cdots+\A_{n}$ determined by $H$.
Conversely, each cell of the mixed subdivision of $\A_1 + \dots + \A_n$ arises this way. 

Consider a regular triangulation of the Cayley configuration $\A_{1} \ast \cdots \ast \A_{n}$. 
A cell $\sigma_{1}+\cdots+\sigma_{n}$ of the associated mixed subdivision of $\A_1 + \cdots + \A_n$
is called {\it mixed} if $\sigma_{1},\ldots,\sigma_{n}$ are segments (consisting of two points), and it is called {\it positively decorated} by $C$ when for $i=1,\ldots,n$,
 if $a_{i_{1}}$ and $a_{i_{2}}$ are the vertices of $\sigma_{i}$, then $c_{i, i_{1}} \cdot  c_{i, i_{2}} <0$. Equivalently,
 a mixed cell $\sigma_{1}+\cdots+\sigma_{n}$ is positively decorated by $C$ if the binomial system $c_{i, i_{1}} x^{a_{i_{1}}}+c_{i, i_{2}} x^{a_{i_{2}}}=0$
 has exactly one positive solution. 
 
\begin{example}\label{ex:mixed3}
Let $\A_3= \{ (0,0), (3,0), (0,3), (1,1)\}$ as in Example~\ref{ex:intro} in the Introduction.
We reproduce in Figure~\ref{fig:MixedCells3} the rightmost picture in Figure~\ref{fig:MixedCells}.
\begin{figure}[t]
\includegraphics[height=35mm]{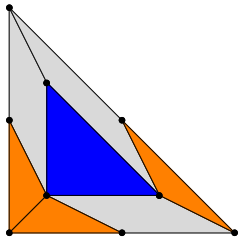}
\caption{A regular
mixed subdivision of $\A_3+\A_3$.}
\label{fig:MixedCells3}
\end{figure}

The vertices of $\A_3+ \A_3$ are the points $(0,0)=(0,0) + (0,0)$,
$(6,0) = (3,0) + (3,0)$ and $(0,6)=(0,3)+(0,3)$. The inner blue triangle 
is the cell $(1,1)+\A$  (or $\A+(1,1)$). 
This imposes that the three grey mixed cells obtained by projecting the lower hull of the lifting
induced by a matrix $H$ have to be sums of two segments,  for any  $H$ defining the blue cell.
The three mixed cells in grey are then necessarily as follows:
\begin{itemize}
\item The mixed cell at the bottom equals the Minkowski sum 

$\{(1,1), (3,0)\}+\{(0,0), (3,0)\}$.
\item The mixed cell at the top on the left equals $\{(1,1),(0,3)\}  +\{(0,0), (0,3)\}$.
\item The mixed cell at the top on the right equals   $\{(1,1),(0,3)\} + \{(3,0), (0,3)\}$.
\end{itemize}
These three mixed cells cannot be simultaneously positively decorated:
looking at the second summand, this would give opposite signs to $(0,0)$ and (3,0),
then opposite signs to $(0,0)$ and $(0,3)$, and also opposite signs to $(3,0)$ and $(0,3)$, which is impossible.
\end{example}

 The following result is well known, it is a particular case of Theorem~4 in~\cite{Stu}.

 \begin{lemma}\label{Stu}
 With the previous notations, there exists $t_{0}>0$ such that for any real number $t$ satisfying $0<t<t_{0}$ the number of positive solutions of the polynomial system
 $$\sum_{j=1}^{s} c_{ij}t^{h_{ij}}x^{a_{j}}=0, \; i=1,\ldots,n,$$
 is equal to the number of positively decorated mixed polytopes of the associated mixed subdivision. \qed
 \end{lemma}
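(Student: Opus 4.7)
The plan is to reduce the statement to the local model via tropical/Viro localization, so that each positively decorated mixed cell contributes exactly one positive root for small $t>0$, and conversely every positive root comes from such a cell.

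First, I would use the Cayley trick to translate the regular mixed subdivision of $\A_1+\cdots+\A_n$ induced by $H$ into a regular polyhedral subdivision of the Cayley configuration $\A_1 \ast \cdots \ast \A_n$. The mixed cells $\sigma_1+\cdots+\sigma_n$ (with each $\sigma_i=\{a_{i,i_1},a_{i,i_2}\}$ a segment) correspond to the full-dimensional simplicial cells $\sigma_1\ast\cdots\ast\sigma_n$ of this subdivision. For each such cell, the dual to the corresponding vertex of the upper envelope of the lifting is an open region $U_\sigma \subset \R^n$, characterized by the condition that for each $i$ the minimum of $h_{i,j}+\langle a_j,y\rangle$ over $j$ is attained exactly on $\{i_1,i_2\}$. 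These open regions are pairwise disjoint chambers of the tropical discriminant associated to $H$.

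Second, on each such chamber $U_\sigma$ I would rescale: fix an interior point $y \in U_\sigma$ and substitute $x_k = t^{-y_k} z_k$. After dividing each equation by $t^{-m_i}$, where $m_i = \min_j(h_{i,j}+\langle a_j,y\rangle)$, the system becomes
\[
c_{i,i_1}z^{a_{i,i_1}} + c_{i,i_2}z^{a_{i,i_2}} + t^{\epsilon_\sigma}\cdot R_i(z,t) = 0,\quad i=1,\dots,n,
\]
with $\epsilon_\sigma>0$ and $R_i$ polynomial in $t$ with bounded coefficients on compact sets of the positive orthant. At $t=0$ this is a binomial system whose number of positive solutions equals $1$ when $c_{i,i_1}\cdot c_{i,i_2}<0$ for all $i$ (and is $0$ otherwise), by direct computation: solvability forces the signs to alternate, and the matrix of exponent differences $\{a_{i,i_2}-a_{i,i_1}\}$ has nonzero determinant since $\sigma$ is a mixed cell (so that $z\mapsto(z^{a_{i,i_2}-a_{i,i_1}})_i$ is a diffeomorphism of $\R^n_{>0}$). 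Hence the binomial limit has exactly one positive root precisely when $\sigma$ is positively decorated by $C$.

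Third, by the implicit function theorem applied in $(z,t)$-coordinates at each simple positive root of the binomial system, there exists $t_\sigma>0$ such that this root extends to a unique smooth branch $z(t)$ of positive solutions for $0<t<t_\sigma$, producing one positive solution of the original Viro system localized in $U_\sigma$. Taking $t_0=\min_\sigma t_\sigma$ yields at least as many positive solutions as positively decorated mixed cells. For the reverse inequality, I would argue that every positive solution $x(t)$ of the Viro system, after passing to logarithms $y(t)=-\log_t x(t)$, must accumulate (as $t\to 0^+$) on the tropical variety, and more precisely on a vertex $U_\sigma$ of the dual subdivision because otherwise two equations would reduce to a single dominant monomial, contradicting solvability; the local analysis above then forces $\sigma$ to be positively decorated and the branch to be the one constructed. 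The main technical obstacle is this last step: controlling the asymptotic behaviour of positive solution branches uniformly in $t$, ruling out solutions that escape to the boundary of a chamber or to infinity, which is where one really invokes Viro's patchworking theorem and the Hardt-type triviality of the family $\{C_t\}$ above a punctured neighbourhood of $t=0$, rather than reproving it from scratch.
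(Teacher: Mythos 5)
The paper does not actually prove Lemma~\ref{Stu}: it is stated with a \emph{qed} symbol and justified only by the remark that it is a particular case of Theorem~4 in~\cite{Stu}. So there is no internal argument to compare against; what you have written is a sketch of the standard proof of that cited theorem, and its outline is correct. Your forward direction (Cayley trick, localization at the chamber dual to each mixed cell, reduction to a binomial system $c_{i,i_1}z^{a_{i_1}}+c_{i,i_2}z^{a_{i_2}}=0$, which has exactly one positive solution iff $c_{i,i_1}c_{i,i_2}<0$ since the exponent differences are linearly independent, then the implicit function theorem) is sound, modulo a harmless sign-convention slip in the rescaling ($x=t^{y}z$, dividing by $t^{m_i}$, is what makes the minimizers of $h_{i,j}+\langle a_j,y\rangle$ dominant).

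The one point to flag is the converse inclusion, which you correctly identify as the crux but then propose to close by ``invoking Viro's patchworking theorem.'' That is essentially circular here: Lemma~\ref{Stu} \emph{is} the positive-orthant case of Viro's theorem for complete intersections, so it cannot be used to finish its own proof. The non-circular way to rule out extra positive solutions, under the standing hypothesis that $H$ induces a regular \emph{triangulation} of the Cayley configuration, is to count complex solutions: each mixed cell $\sigma$ yields $\vol(\sigma)$ simple solutions of its binomial system in $(\C^*)^n$, hence $\vol(\sigma)$ branches for small $t$, and these branches are pairwise distinct across cells because their coordinatewise valuations $\log|x(t)|/\log t$ converge to the distinct dual vertices. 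Summing over mixed cells gives $\sum_\sigma\vol(\sigma)=\mathrm{MV}(\A_1,\dots,\A_n)$ solutions, which by Bernstein's theorem is also an upper bound; hence every torus solution, in particular every positive one, lies on one of the constructed branches, and a branch is positive for small $t$ exactly when it emanates from the positive root of a positively decorated cell. With that replacement your argument becomes a complete and self-contained proof, which is more than the paper itself provides.
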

                                                             
We will show that the bound \eqref{Eq:mainDescartes} in Theorem~\ref{thm:refines}
can be achieved by a Viro polynomial system.

\subsection{Statement and proof of Theorem~\ref{thm:optimalmixed subdivision}}

With the notations of Theorem~\ref{thm:refines}, it is implied by Theorem 1.1 in~\cite{H} that given a circuit $\A$,
there exists a matrix $C$ such that $n_\A(C)=n+1$ if and only if $k= n+2$, and 
there exists a bijection $\sigma:[n+2] \rightarrow [n+2]$ such that $\sgnvar(\mu)=n$.
This result can be recovered as a special case of Theorem~\ref{thm:refines} and the following statement that we
prove in our main result Theorem \ref{thm:optimalmixed subdivision}.

Let $\A$ be a circuit in $\Z^n$ and let $\{K_0, \dots, K_{k-1}\}$ be a partition of $[n+2]$. Consider any bijection 
$\bar{\sigma}: [k] \rightarrow K$ , where $K$ is any set of representatives of $K_0, \dots, K_{k-1}$.
Let $\mu$ be the sequence defined by \eqref{eqn:LambdaAndMu}.
Then, there exists a matrix $C \in \R^{n \times (n+2)}$ of maximal rank satisfying ~\eqref{eq:nonempty} such that  $\bar{\sigma}$ is a strict ordering of
$C$ and
\begin{equation}\label{E:optimal}
n_\A(C)=1+\sgnvar(\mu)
\end{equation}
We will moreover show that the numbers $\mu_{\ell}$ from \eqref{eq:ls}
for those indices $\ell$ which gives a sign change in the sequence \eqref{Eq:mainDescartes} are in absolute value equal to the Euclidean volumes 
of positively decorated mixed cells in a mixed subdivision. We first prove the following Lemma.

\begin{lemma}
\label{lem:MinkowskiSumIsLambdaSum}
Let $\A= \{ a_0, \dots, a_{n+1}\} \subset \Z^n$ be a circuit and 
set  $b_j = (-1)^j \, \det(A(j))$, $j=0,\ldots,n+1$. For each $\ell \in [n+1]$, the Euclidean volume of the Minkowski sum
\begin{equation}
\label{eqn:ConvexHullSegments}
\sum_{j=1}^{\ell} \{a_{0},a_{j}\}+\sum_{j =\ell+1}^{n} \{a_{n+1},a_{j}\}
\end{equation}
is equal to the absolute value of $\sum_{j=0}^{\ell} b_{j}$.
\end{lemma}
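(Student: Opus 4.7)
The plan is to recognize the Minkowski sum in~\eqref{eqn:ConvexHullSegments} as a zonotope: a sum of $n$ line segments in $\R^n$. For such a zonotope the Euclidean volume equals $|\det(V)|$, where $V$ is the $n \times n$ matrix whose columns are the direction vectors of the segments. Here these directions are $v_j = a_j - a_0$ for $1 \le j \le \ell$ and $v_j = a_j - a_{n+1}$ for $\ell+1 \le j \le n$, so the task reduces to evaluating $\det(V)$ and relating it to the coefficients $b_j = (-1)^j \det(A(j))$.

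Next, I would write $v_j = w_j - u$ for $j > \ell$, where $w_j = a_j - a_0$ and $u = a_{n+1} - a_0$, and expand $\det(V)$ by multilinearity in the last $n - \ell$ columns. Every term in which $u$ is substituted into two or more columns vanishes, since $u$ would then appear as a repeated column. Only the term with no substitution and the $n - \ell$ single-substitution terms survive, giving
\[
\det(V) \, = \, \det(W) \, - \sum_{j=\ell+1}^{n} \det(W^{(j)}),
\]
where $W = (w_1, \ldots, w_n)$ and $W^{(j)}$ denotes $W$ with its $j$th column replaced by $u$.

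Each of these $n \times n$ determinants can be identified, up to an explicit sign, with a minor of $A$. Specifically, by prepending the row of ones and the column $(1, a_0)^\top$ and reversing the column operations $C_k \mapsto C_k - C_0$, one recovers $A(n+1)$ or $A(j)$, possibly after a transposition of columns to move the column containing $a_{n+1} - a_0$ into its natural last position. Tracking these signs gives $\det(W) = (-1)^{n+1} b_{n+1}$ and $\det(W^{(j)}) = (-1)^n b_j$. Substituting and invoking the Gale duality identity $\sum_{j=0}^{n+1} b_j = 0$, which follows from the all-ones top row of $A$, the sum collapses to $\det(V) = (-1)^n \sum_{j=0}^{\ell} b_j$, yielding the claim after taking absolute values. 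The main obstacle is the careful sign bookkeeping at each of these three stages (multilinearity expansion, the passage to the minors $\det(A(j))$, and the final telescoping via $\sum b_j = 0$); once these are settled, the argument is essentially mechanical.
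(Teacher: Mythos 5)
Your proof is correct and follows essentially the same route as the paper's: both identify the Minkowski sum as a parallelepiped, expand its determinant by multilinearity keeping only the terms with at most one substituted column, and identify the surviving terms with the minors $b_j=(-1)^j\det(A(j))$. The only (immaterial) difference is that you expand in the last $n-\ell$ columns and then invoke $\sum_{j=0}^{n+1}b_j=0$ to pass to the complementary sum, whereas the paper translates $a_{n+1}$ to the origin and expands in the first $\ell$ columns, obtaining $\sum_{j=0}^{\ell}b_j$ directly.
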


\begin{proof}
There is no loss of generality in assuming that $a_{n+1} = 0$. Let $V$ denote the Euclidean volume of the Minkowski sum \eqref{eqn:ConvexHullSegments}. Then,
\[
n! \, V = \pm \det(a_1-a_0, \dots, a_\ell - a_0, a_{\ell+1}, \dots, a_n).
\]
Let $D = \det(a_1-a_0, \dots, a_\ell - a_0, a_{\ell+1}, \dots, a_n)$.
By expansion, we find that
\begin{align*}
D & =  \det(a_1, \dots, a_\ell, a_{\ell+1}, \dots, a_n) \\
& \qquad - \sum_{j=1}^\ell \det(a_1, \dots, a_{j-1}, a_0, a_{j+1}, \dots  a_\ell, a_{\ell+1}, \dots, a_n) \\
& = \sum_{j=0}^\ell (-1)^{j}\det(a_0, \dots, a_{j-1}, a_{\hat \jmath}, a_{j+1}, \dots, a_n).
\end{align*}
Since $a_{n+1} = 0$, we have that
\[
(-1)^j\det(a_0, \dots, a_{j-1}, a_{\hat \jmath}, a_{j+1}, \dots, a_n) = (-1)^j \det A(j) = b_j,
\]
which finishes the proof.
\end{proof}

The proof of the following main theorem uses results in~\cite{H}.

\begin{thm}\label{thm:optimalmixed subdivision}
Let $\A$ be a circuit in $\Z^n$, let $\{K_0, \dots, K_{k-1}\}$ be a partition of $[n+2]$, and let 
$\bar{\sigma}: [k] \rightarrow K$ be any bijection, where $K$ is any set of representatives of $K_0, \dots, K_{k-1}$. 
Let $B$ the matrix Gale dual to $A$ with coefficients $b_j = (-1)^j \, \det(A(j))$, $j=0,\ldots,n+1$.
Consider the sequence $\mu$ defined by \eqref{eqn:LambdaAndMu} with indices of sign changes
$\ell_{0}<\ell_{1}<\cdots<\ell_{m}$.

Then, there exist matrices $C$ and $H$ in $\R^{n \times (n+2)}$ such that the associated
regular mixed subdivision of $\A_{1}+\cdots+\A_{n}$
contains precisely $m+1$ positively decorated mixed cells $Z_{0},\ldots, Z_{m}$
with Euclidean volumes $\vol(Z_{s})=|\mu_{\ell_{s}}|$ for $s=0,\ldots,m$.

Moreover,  for any real number $t$ with $0 < t < 1$ the matrix  $C_{t}$ in~\eqref{eq:Ct}
has maximal rank, satisfies~\eqref{eq:nonempty}, and  $\bar{\sigma}$ is a strict ordering of $C_{t}$.
Furthermore, there exists $t_{0}>0$ such that for any $t$ satisfying $0<t<t_{0}$,
the bound \eqref{Eq:mainDescartes} is sharp for the associated  Viro polynomial system
 $$\sum_{j=1}^{s} c_{ij}t^{h_{ij}}x^{a_{j}}=0, \; i=1,\ldots,n$$
If, in addition, $m=n$, then we have $\sum_{s=0}^{n}\vol(Z_{s})=n!\, \vol(\A)$.
\end{thm}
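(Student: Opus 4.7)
The plan is to construct an explicit pair $(C,H)$ realizing the claimed mixed subdivision, positive decoration, and Viro polynomial system, extending the construction of \cite{H} from the uniform case $k=n+2$ to the general setting. For each sign-change index $\ell_s$ of $\mu$, the candidate mixed cell is
\[
Z_s = \sum_{j=1}^{\ell_s}\{a_{\bar\sigma_0}, a_{\bar\sigma_j}\} + \sum_{j=\ell_s+1}^{k-1}\{a_{\bar\sigma_{k-1}}, a_{\bar\sigma_j}\},
\]
regarded as a cell of a regular mixed subdivision of $\A+\cdots+\A$. After relabeling according to the sub-circuit indexed by $K$, Lemma~\ref{lem:MinkowskiSumIsLambdaSum} yields $\vol(Z_s)=|\mu_{\ell_s}|$.

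Next, I would choose $H$ and $C$ simultaneously. On the $k$ columns indexed by $K$, I invoke the Viro-type prescription of \cite{H} to produce heights whose induced regular mixed subdivision contains exactly $Z_0,\dots,Z_m$ as its mixed cells, together with a sign pattern on the corresponding columns of $C$ making each $Z_s$ positively decorated. For indices $j\in[n+2]\setminus K$, I assign $h_{ij}$ sufficiently large so that these columns contribute no new mixed cells, and I pick the magnitudes of the $c_{ij}$ so that the Gale-dual vector $P_j$ collapses onto the ray generated by $P_{\bar\sigma_\ell}$ precisely when $j\in K_{\bar\sigma_\ell}$. Signs of $c_{ij}$ for $j\notin K$ can then be chosen to preserve \eqref{eq:nonempty}, maximal rank, and the half-plane condition on the Gale-dual configuration, all of which are open constraints.

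For the parametric family $C_t=(c_{ij}t^{h_{ij}})$, every entry has the same sign as that of $C$ for all $t\in(0,1]$, so signs of the maximal minors of $C_t$ (and hence the positive-cone condition \eqref{eq:nonempty}, maximal rank, and the equivalence-class data of Definition~\ref{def:K}) persist throughout $t\in(0,1)$ once the magnitudes in the construction are chosen generically. In particular, $\bar\sigma$ remains a strict ordering of $C_t$. Lemma~\ref{Stu} then yields $t_0>0$ such that for $0<t<t_0$ the Viro system has exactly $m+1=1+\sgnvar(\mu)$ positive solutions, matching the bound of Theorem~\ref{thm:refines}.

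Finally, if $m=n$ then necessarily $k=n+2$ (since $\sgnvar(\mu)\le k-2$ as $\mu_{k-1}=0$) and we obtain $n+1$ positively decorated mixed cells. The sum of Euclidean volumes of the mixed cells in any mixed subdivision of the $n$-fold sum $\A+\cdots+\A$ equals the mixed volume $\mathrm{MV}(\A,\dots,\A)=n!\,\vol(\A)$, and our construction produces no other mixed cells, so $\sum_{s=0}^n\vol(Z_s)=n!\,\vol(\A)$. The main obstacle is the unified construction of $H$ and $C$: the lifting must realize precisely the desired mixed cells $Z_s$ with sign pattern decorating them, and the extra indices $[n+2]\setminus K$ must be handled without either breaking this structure or introducing spurious mixed cells. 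The uniform case is provided by \cite{H}; the non-uniform case requires carefully combining a lifting on $K$ from \cite{H} with a very high lifting on the complement, and verifying compatibility with the Gale-dual collapse prescribed by the partition.
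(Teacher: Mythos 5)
There is a genuine gap, and it sits exactly where you flag "the main obstacle": the construction of $(C,H)$ outside the maximal case. Your plan of taking the lifting of \cite{H} on the $k$ columns indexed by $K$ and pushing the columns of $[n+2]\setminus K$ "sufficiently high so that they contribute no new mixed cells" cannot work. First, when $k<n+2$ the points $\{a_j\}_{j\in K}$ are affinely independent and span only a $(k-1)$-dimensional affine subspace, so a subdivision supported only on them cannot cover the $n$-dimensional polytope; the indices outside $K$ must genuinely participate in the mixed cells. Indeed your own candidate cells $Z_s$ are Minkowski sums of only $k-1$ segments, whereas a mixed cell of $\A_1+\cdots+\A_n$ must be a sum of $n$ segments (the paper's cells $Z_s=\sum_{j=1}^{j_s}\{a_0,a_j\}+\sum_{j=j_s+1}^{n}\{a_{n+1},a_j\}$ run over all $n$ equations), so Lemma~\ref{lem:MinkowskiSumIsLambdaSum} does not apply to your $Z_s$ as written. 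Second, even restricted to $K$, Theorem 1.1 of \cite{H} only covers the case of maximal sign variation; when $m<k-2$ it does not hand you a lifting with exactly $m+1$ decorated cells on those columns. The paper's resolution is a different, two-step Gale-dual reduction that your proposal is missing: group the equivalence classes lying between consecutive sign changes of $\mu$ into blocks $M_0,\dots,M_{m+1}$ with $\alpha_s=\sum_{j\in M_s}b_j$, build an auxiliary circuit $\widehat{\A}=\{\hat a_0,\dots,\hat a_{m+1}\}\subset\R^m$ with Gale dual $\alpha$ (to which \cite{H} does apply, since $\sgnvar(\beta)=m$ is maximal there), and then define the Gale dual of $C$ by setting $P_j=\widehat P_s$ for all $j\in M_s$. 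The identity $\prod_j p_j^{b_j}=\prod_s\hat p_s^{\alpha_s}$ and Proposition~\ref{prop:Galebijection} give $n_\A(C)=n_{\widehat\A}(\widehat C)=m+1$, and the explicit choice $\widehat P_s=(1,t^{\hat h_s})$ produces the Viro system and the cells $Z_s$ by an analysis of the lower hull of the Newton polygon of $g_+-g_-$.

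Two further points. Your persistence argument for $C_t$ ("every entry has the same sign, so the signs of the maximal minors persist") is false as stated: minors are signed sums of products of entries, and fixing the signs of the entries does not fix the signs of the minors. The paper instead constructs the Gale dual $P$ of $C_t$ explicitly for each $t\in(0,1)$ and reads off \eqref{eq:nonempty}, the rank, and the ordering directly from the determinants $\det(\widehat P_j,\widehat P_i)$. Your treatment of the case $m=n$ (via $\mathrm{MV}(\A,\dots,\A)=n!\,\vol(\A)$ and the absence of other mixed cells) is a legitimate alternative to the paper's direct telescoping computation of $\sum_j|\mu_j|$, but it inherits the unproved claim that the construction yields no undecorated mixed cells; the paper only needs (and only proves, via the upper bound of Theorem~\ref{thm:refines}) that there are no \emph{additional positively decorated} ones.
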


\begin{proof}
Since $\mu$ has at most $n+1$ non-zero entries,
we find that 
\[
\sgnvar(\mu) \leq n
\]
with equality if and only if $k=n+2$ and $\mu_{j} \cdot \mu_{j+1}<0$ for $j=0,\ldots, n$.
In this case, the existence of a matrix $C$ as required follows from \cite[Theorem 1.1]{H}.

We now turn to the general case. 
Let $\ell_0<\ell_1<\cdots<\ell_m$ be  the indices of sign changes in $\mu$.
Define
\[
M_s = \bigcup_{i = \ell_{s-1}+1}^{\ell_s} K_i, \quad s = 1, \dots, m
\]
and, in addition,
\[
M_0 = \bigcup_{i = 0}^{\ell_0} K_i,
\qquad \text{ and } \qquad
M_{m+1} = \bigcup_{i = \ell_m+1}^{k-1} K_i,
\]
where $\{K_0, \dots, K_{k-1}\}$ is the desired partition of $[1+n]$.
Set
\[
\alpha_s=\sum_{j \in M_s} b_{j}, \quad s=0,\ldots,m+1.
\]

Assume without loss of generality that $\alpha_0>0$.
Then,
\[
\sgn(\mu_{\ell_{s}})=\sgn(\alpha_{s})=(-1)^{s}, \quad s \in [m+1].
\]
Define,
\[
\beta_{s}=\alpha_{0} + \dots + \alpha_{s}, \quad s \in [m+2]
\] 
so that the sequence $\beta$ relates to $\alpha$ just as $\mu$ relates to $b$.
Actually,
\[
\beta_{s}=\mu_{\ell_s} \text{ for } s \in [m+1],
\]
and $\beta_{m+1}=0$. 
We have that $\sgnvar(\beta)=\sgnvar(\mu) = m$,
which is one less than the number of non-zero entries of $\beta$.
That is, the sequence $\beta$ is as in the special case considered in
the first paragraph of the proof, with $m=n$. 

Choose points $\hat a_0,\ldots,\hat a_{m+1}  \in \R^m$ such that the rows of
\[
\widehat A = \left[\begin{array}{ccc} 1 & \cdots & 1 \\ \hat a_0 &\cdots &   \hat a_{m+1} \end{array}\right]
\]
are a basis for the kernel of the matrix $(\alpha_1, \dots, \alpha_{m+1})$.
By \cite[Theorem 1.1]{H}, there exists a matrix $\widehat C \in \R^{m \times (m+2)}$
such that the system with coefficient matrix $\widehat C$ and support $\widehat{\mathcal A}=\{{\hat a}_0,\ldots,{\hat a}_{m+1}\}$  
has $m+1$ positive solutions, counted with multiplicities.

Let $\widehat P$ denote a Gale dual of the coefficient matrix $\widehat{C}$ which satisfies~\eqref{eq:v}. For $j \in [n+2]$
set $P_{j}=\widehat P_s$ if $j \in M_s$.
Choose any matrix $C \in \R^{n \times (n+2)}$ for which the matrix $P$ with rows $\{P_0, \dots, P_{n+1}\}$ is a Gale dual. 
Applying twice Proposition \ref{prop:Galebijection} we get 
\[
n_\A(C)=n_{\widehat A}(\widehat C)=m+1.
\]
Indeed, setting $p_{j}(y)=\langle P_{j}, (1,y) \rangle$ for $j \in [n+2]$ and  ${\hat p}_{s}(y)=\langle {\widehat P}_{s}, (1,y) \rangle$ for $s \in [m+2]$, we get
\begin{equation}\label{E:same}
\prod_{j \in [n+2]}p_{j}(y)^{b_{j}}=  \prod_{s \in [m+2]} {\hat p}_{s}(y)^{\alpha_{s}},
\end{equation}
and obviously the common domain of positivity $\Delta_{P}$ of the polynomials $p_{j}$ coincides with that of the polynomials ${\hat p}_{s}$.

 Replacing $A$ by a matrix $A_{\sigma}$ whose columns are the elements of $\A$ arranged with respect to a permutation
$\sigma \in \mathfrak{S}_{[n+2]}$ inducing $\bar{\sigma}$, we might assume without loss of generality that
each $K_{j}$ consists of consecutive integer numbers, and that $x<y$ for any $x \in K_{i}$ and $y \in K_{j}$ such that $i<j$. In particular, $0 \in K_{0}$ and $n+1 \in K_{k-1}$.

Let 
\[
\hat h_0< \hat h_1< \cdots < \hat h_{m}
\] 
be a strictly increasing sequence of real numbers, and let 
$t \in (0,1)$ be a parameter. Set 
\[
\widehat{P}_0=(0,t^{\hat h_0}), \quad \widehat{P}_{m+1}=(1,0), \quad \text{ and } \quad \widehat{P}_s=(1,t^{\hat h_{s}}), \quad s=1,\ldots,m.
\] 
Note that the vectors $\widehat{P}_s$, for $s = 0, \dots, m+1$, are contained in an open half plane passing through the origin, 
and  $\det(\widehat{P}_{j}, \widehat{P}_{i})<0$ whenever $i<j$. 
Let $M_0, \dots, M_{m+1}$ as above and define, for $\ell \in M_s$,
\[
P_{\ell}=\widehat{P}_s \quad \text{ and } \quad h_\ell = \hat h_s.
\]
Set 
\[
\hat p_{s}(y)=\langle \widehat{P}_{s}, (1,y) \rangle
\quad \text{ and } \quad
p_{j}(y)=\langle P_{j}, (1,y) \rangle,
\]
where $s \in [m+2]$ and $j \in [n+2]$,
so that \eqref{E:same} holds.
Since $p_{j}(y)=1$ for $j \in M_{m+1}$, the right product in \eqref{E:same} 
can be taken over $[m+1]$ and the left product in \eqref{E:same} can be taken over $M_0 \cup \dots \cup M_{m}$.
Recall that $n_{A}(C)$ is equal to the number of solutions of \eqref{eqn:GaleDualSystem} in $\Delta_{P}=\Delta_{\widehat{P}}$,
and 
\[
\sgn(\alpha_{s}) = \sgn(\beta_s) =(-1)^{s}, \quad s \in [m+1].
\] 
Thus, we may rewrite equation~\eqref{eqn:GaleDualSystem} as an equation in $y$, dependent on a parameter $t$,
(cf.~\eqref{eq:g})
\begin{equation}
g_{+}(y)-g_{-}(y)=0,
\end{equation}
where
\[
g_+(y) = \prod_{\begin{subarray}{c}{s \in [m+1]}\\{s \text{ even}}\end{subarray}} {\hat p}_{s}(y)^{\alpha_{s}}
\quad \text{and} \quad 
g_-(y) = \prod_{\begin{subarray}{c}{s \in [m+1]}\\{s \text{ odd}}\end{subarray}} {\hat p}_{s}(y)^{-\alpha_{s}}.
\]
Then, 
we have the following interlacing property between the exponents of the
monomials of $g_{+}$ and $g_{-}$:
\begin{equation}\label{interlacing}
0< \alpha_{0}<-\alpha_{1}<\alpha_{0}+\alpha_{2}<-(\alpha_{1}+\alpha_{3})<\alpha_{0}+\alpha_{2}+\alpha_{4}<\cdots,
\end{equation}
where the last term is the degree of $g_{+}(y)- g_{-}(y)$:
\[
\deg\big(g_+(y) - g_-(y)\big) = \left\{\begin{array}{ll}
\displaystyle\sum_{\begin{subarray}{c}{s \in [m+1]}\\{s \text{ even}}\end{subarray}}\phantom{-}\alpha_{s} & \text{ if } m \text{ is even,}\\
\displaystyle\sum_{\begin{subarray}{c}{s \in [m+1]}\\{s \text{ odd}}\end{subarray}} -\alpha_{s}& \text{ if } m \text{ is odd.}
\end{array}\right.
\]

Consider $g_{+}$ and $g_{-}$ as generalized polynomials (i.e., exponents are real numbers) in the pair of variables $(y,t)$.
Set $Q_{-1}=(0,0)$ and for $s=0,\ldots,m$ set
$$Q_{s}=\left(\sum_{\begin{subarray}{c}{r\,\leq\, s} \\ {r \text{ even}}\end{subarray}} \alpha_{r}
 \sum_{\begin{subarray}{c}{r \,\leq \,s}\\{r \text{ even}}\end{subarray}}\alpha_{r}\hat h_{r} \right)$$ if 
s is even and $$Q_{s}=\left(\sum_{\begin{subarray}{c}{r\,\leq \,s}\\{r \text{ odd}}\end{subarray}} -\alpha_{r},
\sum_{\begin{subarray}{c}{r\,\leq\, s}\\{r \text{ odd}}\end{subarray}} -\alpha_{r}\hat h_{r}\right)$$ if $s$ is odd.
The Newton polytope of of $g_{+}$ (resp., $g_{-}$) is the convex hull of the points $Q_{s}$ with $s$ even (resp., $s$ odd). 
The Newton polytope $N$ of  $g_{+}-g_{-}$ is the convex hull of the $m+2$ points $Q_{-1}, Q_{0},\ldots,Q_{m}$. 
Note that the abcissa of these points increase as their indices by  \eqref{interlacing}.
The lower part of $N$ is the part of its boundary where some linear function $\langle (u,1), \cdot \rangle$ attains its minimum. Then, there exists an increasing sequence 
 $\hat h_0<\hat h_1< \cdots < \hat h_{m}$ such that the lower part of $N$ consists of the $m+1$ edges
$$E_{s}=\{Q_{s-1}, Q_{s}\} , \; s=0,\ldots,m.$$
This fact is already used in \cite{H}, where a detailed computation is provided. 

Assume without loss of generality that $\hat h_{0}=0$. 
The necessary and sufficient conditions for which $E_{0}$ is a face of $N$ where a linear function $\langle (u,1), \cdot \rangle$ takes its minimum
are
\begin{equation}
\label{regularGale=0}
u= 0 \quad \text{ and } \quad \hat h_{r} >0, \quad r=1,\ldots, m,
\end{equation}
while for $s \geq 1$ the necessary and sufficient conditions for which $E_{s}$ is a face of $N$ where a linear function $\langle (u,1), \cdot \rangle$ takes its minimum are
\begin{equation}\label{regularGale>0}
\left\{\begin{array}{rll}
u+\hat h_{r}\phantom{)}   &<  & 0,  \quad  r=1,\ldots,s, \\
u+\hat h_{r} \phantom{)}   &> &  0 , \quad r=s+1,\ldots, m,\\
\sum_{r=0}^{s}\alpha_{r}(u+\hat h_{r})  &= &  0. 
\end{array}
\right.
\end{equation}
The equality in \eqref{regularGale>0} is obtained from that the linear function takes the same value at the endpoints of the edge $E_{s}$, while the inequalities 
in~\eqref{regularGale>0} are obtained using that, when $s$ is even, $Q_{s-1}$ and $Q_{s}$ are the vertices of the lower parts of the Newton polytopes 
of $g_{-}$ and $g_{+}$, respectively, where the linear function is minimized (ans similary when $s$ is odd by permuting $g_{+}$ and $g_{-}$).

Let $j_{0}, j_{1}, \ldots, j_{m}$ be the integer numbers such that
$M_0=\{0,1,\ldots,j_{0}\}$, $\cup_{r=1}^{s} M_{r}=\{j_0+1,j_{0}+2,\ldots,j_{s}\}$, $s=1,\ldots,m$,
and $M_{m+1}=\{j_m+1, j_{m}+2,\ldots,n+1\}$. Then, 
the vectors $P_{j} \in \R^2$ for $j \in [n+2]$ are Gale dual vectors of the coefficient matrix  of the following Viro polynomial system. 
We assume $a_{n+1}=0$ and, hence, $x^{a_{n+1}}=1$.
\begin{equation}\label{Viro}
\left\{
\begin{array}{ll}
x^{a_{j}}=x^{a_0} & j = 1, \dots, j_0\\
x^{a_{j}}= 1+t^{h_{j}} x^{a_0}
& j = j_0+1, \dots, j_{m}\\
x^{a_{j}}=1 & j = j_m+1, \dots, n.
\end{array}
\right.
\end{equation}
(If $j_{0}=0$, there is no equation $x^{a_{j}}=x^{a_0}$.)  
We claim that the edges $E_{s}$ defined above are in one-to-one correspondence with mixed cells $Z_{s}$ contained in the regular mixed subdivision corresponding to the system \eqref{Viro}.
For $s=0,\ldots,m$, set
\begin{equation}\label{mixed cells}
Z_{s}=\sum_{j = 1}^{j_s} \{a_{0},a_{j}\}+\sum_{j=j_s+1}^{n} \{a_{n+1},a_{j}\}.
\end{equation}

Note that each $Z_{s}$ is positively decorated by the coefficient matrix of~\eqref{Viro}.
Viewing~\eqref{Viro} as a generalized polynomial system in the $n+1$ variables $(x,t)$,
we get $n$ equations whose associated supports 
are the following triangles and (possibly) line segments in 
$\R^n \times \R$.
\begin{equation}\label{triangles}
\left\{\begin{array}{ll}
S_{j}=\{(a_{j},0),(a_{0},0)\}, &
\text{ if }j = 1, \dots, j_0,\\
S_{j}=\{(a_{j},0), (0,0), (a_{0},h_{j})\}, &
\text{ if }j = j_0+1, \dots, j_m,\\
S_{j}=\{(a_{j},0), (0,0)\}, &
\text{ if }j= j_m+1, \dots, n.
\end{array}\right.
\end{equation}

Consider the case $s >0$ and assume that \eqref{regularGale>0} is satisfied.
Then,
\begin{equation}\label{regularGale>0bis}
\left\{\begin{array}{rll}
u+h_{j}\phantom{)}  & <  &0,  \quad j = j_0+1, \dots, j_s \\
u+h_{j}\phantom{)}   &>  &0 , \quad j = j_s+1, \dots, j_m \\
\sum_{j  = 0}^{j_s} b_{j}(u+h_{j}) & =  &0. 
\end{array}
\right.
\end{equation}
Using \eqref{regularGale>0bis}, we see that if $z \in \R^n$ satisfies the conditions
\begin{equation}
\label{The conditions}
\left\{\begin{array}{ll}
\langle z, a_{j} \rangle=u, & \mbox{for } {j = 0, \dots, j_0} \\
\langle z, a_{j} \rangle=u+h_{{j}} & \mbox{for } j  = j_0+1, \dots, j_s\\
\langle z, a_{j} \rangle=0 &\mbox{for } j = j_s+1, \dots, n
\end{array}
\right.
\end{equation}
then, writing $\psi$ for the linear function given by the scalar product with $(z,1)$,
we get
 \begin{equation}
\left\{\begin{array}{ll}
\psi(a_{j},0) =\psi (a_{0},0) &  \mbox{for } j = 0, \dots, j_0 \\
\psi(a_{j},0)=\psi(a_{0},h_{j})<\psi(0,0)=0 
& \mbox{for }  j = j_0+1, \dots, j_s\\
\psi(a_{j},0)=\psi(0,0)=0< \psi(a_{0},h_{j})
& \mbox{for }j = j_s+1, \dots, j_m\\
\psi(a_{j},0)=\psi(0,0)
& \mbox{for }  j = j_m+1, \dots, n+1
 \end{array}
 \right.
 \end{equation}
which implies that $Z_{s}$ is a mixed cell of the mixed subdivision associated to the system \eqref{Viro}.
We now show that the existence of $z \in \R^n$ satisfying \eqref{The conditions}.
Since $h_{j}=h_{0}=0$ for all $j \in M_{0}$,
the first two lines of \eqref{The conditions} are equivalent to
\[
\langle z, a_{j} \rangle=u+h_{{j}}, \quad \text{for } j = 0, \dots, j_s.
\]
The system \eqref{The conditions} consists of $n+1$ equations.
Let $j(s)$ be any element in $\{j_0+1, \dots, j_s\}$. 
Forgetting the equation corresponding to $j=j(s)$ the system \eqref{The conditions} has exactly one solution in $z$ since the $n$ vectors $a_{j}$ for 
$j \in \{0,\ldots,n\} \setminus \{j(s)\}$ are linearly independent. Then, 
\begin{align*}
0&=\bigg\langle z, \sum_{j=0}^{n}b_{j}a_{j}\bigg\rangle
 = \sum_{j=0}^{n}b_{j}\langle z, a_{j}\rangle=\sum_{j=0}^{j_{s}}b_{j}\langle z, a_{j}\rangle\\
 & =b_{j(s)}\big(\<z, a_{j(s)}\>  - u - h_{j(s)}\big) + \sum_{j=0}^{j_s}b_{\ell}(u+h_{\ell})\\
 & = b_{j(s)}\big(\<z, a_{j(s)}\>  - u - h_{j(s)}\big),
\end{align*}
where in the last equality we used \eqref{regularGale>0bis}. 
We conclude that $\langle z, a_{j(s)} \rangle=u+h_{{j(s)}}$, 
and thus $z$ is a solution of the whole system \eqref{The conditions}.

Consider now the case $s=0$, and assume \eqref{regularGale=0} is satisfied.
Then we get $u=0$ and $h_{j}  >  0$ for all $j = j_0+1, \dots, j_m$.
Writing $\psi$ for the linear function given by the scalar product with $(0,1)$ we get
\begin{equation}
\left\{\begin{array}{ll}
\psi(a_{j},0) =\psi (a_{0},0) & \mbox{for } j = 0, \dots, j_0, \\
\psi(a_{j},0)=\psi(0,0)=0< \psi(a_{0},h_{j})
& \mbox{for } j = j_0+1, \dots, j_m\\
\psi(a_{j},0)=\psi(0,0)
& \mbox{for }  j = j_m+1, \dots, n+1.
 \end{array}
 \right.
 \end{equation}
which shows that $Z_{0}$ is a mixed cell of the mixed subdivision associated to the system \eqref{Viro}. 
Using Lemma \ref{Stu} we get the assertion about the number of positive solutions of the Viro polynomial system. We also obtain that there cannot be more 
positively decorated mixed cells since otherwise this Viro polynomial system would have (for $t>0$ small enough) more than
$m+1$ positive solutions, which would contradict the upper bound \eqref{Eq:mainDescartes}.

Lemma~\ref{lem:MinkowskiSumIsLambdaSum} shows that the Euclidean volume of $Z_{s}$ is the absolute value of $\mu_{\ell_{s}}$.
Finally, we have that $m=n$ if and only if $k=n$ and $\sgnvar(\mu) = n$.
Assuming $\mu_{0}>0$, we thus have $(-1)^j \mu_{j} >0$, which implies
that $(-1)^j b_{j} >0$ for $j=0,\ldots,n$. Therefore, we have
\begin{align*}
\sum_{j=0}^{n} |\mu_{j}| &=\sum_{j=0}^{n}(-1)^j \mu_{j}=b_{0}-(b_{0}+b_{1})
+(b_{0}+b_{1}+b_{2})-\ldots\\
& 
= \left\{\begin{array}{ll}
\displaystyle\phantom{-}\sum_{j \text{ even}} b_j& \text{ if } n \text{ is even}\\
\displaystyle-\sum_{j \text{ odd}} b_{j}& \text{ if } n \text{ is odd}
\end{array}\right.
\end{align*}
which is equal to $\vol_{\Z}(\A)=n!\, \vol(\A)$.
\end{proof}

\subsection{The configuration space in case $n=2$}
  We describe the partitioning of the moduli space of circuits which are the support of bivariate polynomial systems
into equivalence classes describing the maximal number of positive solutions.
For any circuit $\A$, the maximum value of $n_\A(C)$ (when it is finite) coincides by Theorem~\ref{thm:optimalmixed subdivision}
with the maximum upper bound provided by Theorem~\ref{thm:refines}. It follows that this maximum equals either  $3$ or $2$.
We depict in Figure~\ref{fig:ConfigSpaceDim2} the configuration space of circuits $\A$.
\begin{figure}[t]
\includegraphics[height=50mm]{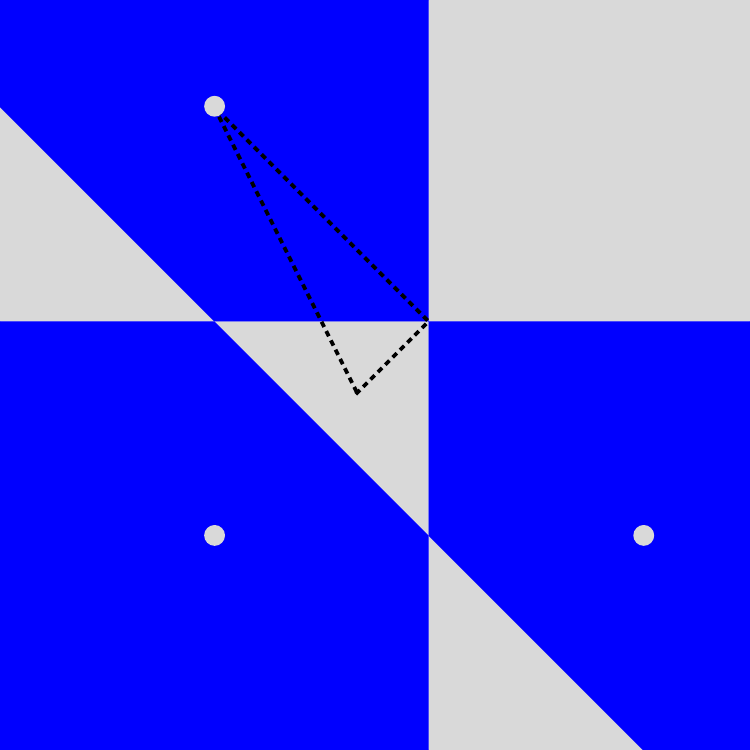}
\hspace{10mm}
\includegraphics[height=50mm]{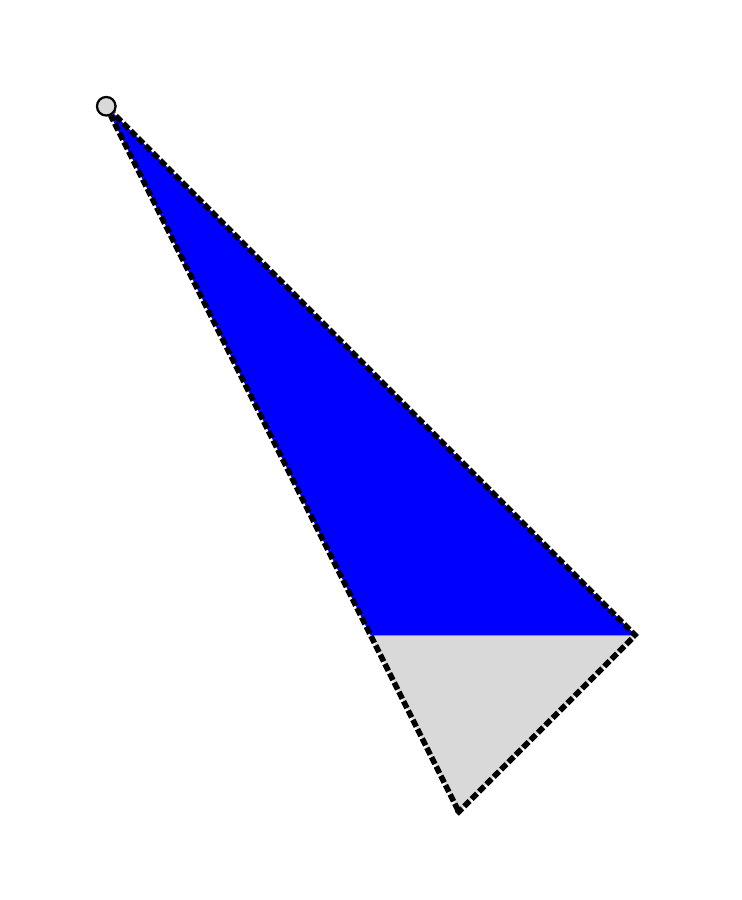}
\caption{The configuration space in dimension two.}
\label{fig:ConfigSpaceDim2}
\end{figure}
 Each such support set $\A$ is determined (up to affine transformation)
by a Gale dual $B = (b_0, \,  b_1, \, b_2, \,b_3 )^\top$ of the associated matrix $A$.
Recall that all $b_j \neq 0$ because $\A$ is a circuit.

There is a real polynomial system supported on $A$ with three positive solutions 
if and only if there exists an ordering of the indices $\{0,1,2,3\}$ (i.e., a permutation $\sigma$)
such that
\[
b_{\sigma_{0}}(b_{\sigma_{0}} + b_{\sigma_{1}}) < 0 
\quad 
\text{ and }
\quad
b_{\sigma_{0}} (b_{\sigma_{0}} + b_{\sigma_{1}}+ b_{\sigma_{2}}) > 0.
\]
As $|\mathfrak{S}_4| = 24$, the loci of support sets which admits three positive solutions
is covered by 24 open polyhedral cones. 
But Theorem~\ref{thm:refines} implies that in this case $\sgnvar(b_{\sigma_{0}}, b_{\sigma_{1}}, b_{\sigma_{2}}, b_{\sigma_{3}})=3$
for some $\sigma \in \mathfrak{S}_4$.
Since $\sum_{j=0}^{3} b_j=0$ not all coefficients have the same sign. Then, there are two positive and two negative coefficients in this sequence.
This implies that a necessary condition to ensure a sparse polynomial system with $3$ positive solutions is simply
\begin{equation}\label{eq:bes}
 b_0 \, b_1\, b_2 \, b_3 > 0,\end{equation}
 but we need to moreover ensure that the sign variation of the corresponding sequence $\mu$ equals $2$.
 It turns out that except for the special case $(b_{0},b_{1},b_{2},b_{3})=(-1,-1,1,1)$ (up to permutation $\sigma$ and rescaling)
  the condition~\eqref{eq:bes} is sufficient to ensure a sparse polynomial system with $3$ positive solutions.

Assume that $b_3=1$. As $b_0 = -b_1 -b_2 - b_3$,  we are left with two coordinates $(b_1, b_2)$.
The loci of support sets which admit three positive solutions consists of the open set
\[
b_1\,  b_2\,  (1+b_1+b_2) < 0
\]
with exception of the three points $(-1,-1), (1,-1)$  and  $(-1, 1)$.
This region is shown in blue in the left picture in Figure~\ref{fig:ConfigSpaceDim2}.
The three exceptional points correspond to support sets $\A$ such that 
some sum~\eqref{eq:ls} vanishes.

 Dividing by the maximum of the absolute values of $b_0, \dots, b_3$, they will all lie in the interval $[-1,1]$. 
Therefore, up to permutation, each support set $\A$ admits
a representative such that
\[
-1 \leq b_0 \leq b_1 \leq b_2 \leq b_3 = 1.
\]
Note that both $b_0 = -1 -b_1-b_2$ and $b_1$ need to be negative, so~\eqref{eq:bes} reduces to the simpler condition that $b_2 > 0$.
The \emph{fundamental region} is marked by a dotted line in the left picture in
Figure~\ref{fig:ConfigSpaceDim2},
and has been magnified in the right picture in that figure.

\providecommand{\bysame}{\leavevmode\hbox to3em{\hrulefill}\thinspace}
\providecommand{\MR}{\relax\ifhmode\unskip\space\fi MR }
\providecommand{\MRhref}[2]{%
  \href{http://www.ams.org/mathscinet-getitem?mr=#1}{#2}
}
\providecommand{\href}[2]{#2}

\end{document}